\title{On compactifications and the topological dynamics of definable groups}
\date{\today}
\author{Jakub Gismatullin\thanks{Supported by the Marie Curie Intra-European Fellowship MODGROUP no. PIEF-GA-2009-254123 and Polish Goverment MNiSW grant N N201 545938}\\University of Leeds and Uniwersytet Wroc\l awski
\and Davide Penazzi\thanks{Supported by a postdoctoral fellowship on EPSRC grant EP/I002294/1}\\University of Leeds
\and Anand Pillay\thanks{Supported
by EPSRC grant EP/I002294/1}\\University of Leeds} 
\theoremstyle{plain}
\newtheorem{Theorem}{Theorem}[section]
\newtheorem{Proposition}[Theorem]{Proposition}
\newtheorem{Lemma}[Theorem]{Lemma}
\newtheorem{Fact}[Theorem]{Fact}
\theoremstyle{definition}
\newtheorem{Definition}[Theorem]{Definition}
\newtheorem{Question}[Theorem]{Question}
\theoremstyle{remark}
\newtheorem{Remark}[Theorem]{Remark}
\newtheorem*{claim}{Claim}
\newtheorem*{acknowledgements}{Acknowledgements}
\newcommand{\R}{\mathbb R}   
\newcommand{\Q}{\mathbb Q}
\newcommand{\F}{\mathbb F}
\DeclareMathOperator{\tp}{tp}
\DeclareMathOperator{\SL}{SL}
\DeclareMathOperator{\Th}{Th}
\DeclareMathOperator{\Homeo}{Homeo}
\begin{document}

\maketitle

\begin{abstract} 
We discuss definable compactifications and topological dynamics. For $G$ a group definable in some structure $M$, we define notions of  ``definable'' compactification of $G$ and ``definable'' action of $G$ on a compact space $X$ (definable $G$-flow), where the latter is under a definability of types assumption on $M$. We describe the universal definable compactification of $G$ as $G^{*}/(G^*)^{00}_{M}$ and the universal definable $G$-ambit as the type space $S_{G}(M)$. We also prove existence and uniqueness of ``universal minimal definable $G$-flows'', and discuss issues of amenability and extreme amenability in this definable category, with a characterization of the latter. For the sake of completeness  we also describe the universal (Bohr) compactification and universal $G$-ambit in model-theoretic terms, when $G$ is a topological group (although it is essentially well-known). 
%
%{\bf We give a model-theoretic construction of the universal (or Bohr) compactification of a topological group $G$, as well as of the ``universal $G$-ambit''. We also introduce ``definable'' or ``tame'' compactifications and topological dynamics. Here $G$ is not a topological group but  a group first order definable in a structure $M$. We again recover the universal compactification/$G$-ambit  as  $G^{*}/(G^*)^{00}$, $S_{G}(M)$ respectively. 
%We also introduce the notion of a compactification of $(G,\mathcal{B})$ where $G$ is an abstract group and $\mathcal{B}$ is a $G$-invariant Boolean algebra of subsets of $G$, and again construct the universal compactification. (When $\mathcal{B}$ is the Boolean algebra of all subsets of $G$ this just reduces to the universal compactification of $G$ as a discrete group.) We also sketch how all this extends to topological dynamics; the action of a group $G$ on a compact space $X$ with distinguished dense orbit.}
\end{abstract}

\section{Introduction and preliminaries}

Given a topological (Hausdorff) group $G$, a compactification of $G$ is a pair $(H,f)$ where $H$ is a compact topological group and $f\colon G\to H$ a continuous homomorphism with dense image. There is a \emph{universal} such compactification, called the \emph{Bohr compactification}. Let us note immediately that a compactification of the topological group $G$ is a special case of continuous action of $G$ on a compact space $X$, where $X$ has a distinguished point $x_{0}$ with dense orbit under $G$ (a so-called $G$-ambit.) Again there is a universal $G$-ambit.  There is reasonably compehensive account of abstract topological dynamics in \cite{Auslander}. \\

On the other hand, given a group $G$ definable in a structure $M$, a saturated elementary extension $M^*$ of $M$, $G^*$ the interpretation of the formulas defining $G$ in $M^*$ and a type-definable over $M$, normal subgroup $N$ of $G^{*}$ of index at most $2^{|M|}$, $G^{*}/N$, equipped with the so-called \emph{logic topology} is a compact Hausdorff group, and the identity embedding of $G$ in $G^{*}$ induces a homomorphism from $G$ into $G^{*}/N$ with dense image. There is a smallest such $N$ which is called $(G^{*})^{00}_{M}$, a kind of \emph{connected component} of $G^{*}$. More generally we have the action of $G$ on the space $S_{G}(M)$ of complete types over $M$ concentrating on $G$, where $S_{G}(M)$ is the Stone space of the Boolean algebra of definable subsets of $G$. And there is a canonical dense orbit: $G\cdot \tp(1_G/M)$.
\\

Here we will relate these two theories, in the context of two categories: first the classical case of topological groups, and secondly the new case of definable groups. The case of compactifications of topological groups was explicated by Robinson and Hirschfeld (\cite{Robinson}, \cite{Hirschfeld}) in the language of nonstandard analysis. For the more general case of $G$-flows, the explication of the universal $G$-ambit via the \emph{Samuel compactification} of $G$ (\cite{Samuel}, \cite{Uspenskij}), with respect to the right uniformity on $G$, is basically equivalent to the model-theoretic account that we give below.
As far as definable groups are concerned, we give appropriate definitions  (definable compactification, definable $G$-flow), obtaining in a sense a theory of \emph{tame} topological dynamics, although we make a \emph{definability of types} assumption on the model concerned, in the case of group actions. A free group $\F_{n}$ ($n\geq 2$), considered as a first order structure $(\F_{n},\cdot)$, will be extremely amenable as a definable group, although considered as a discrete topological group, it will not be such. \\

A big influence on this paper is work of Newelski (for example \cite{Newelski1}, \cite{Newelski2}) on trying to use the machinery of topological dynamics to extend stable group theory to tame unstable contexts. The present paper is quite soft, aiming partly at putting Newelski's ideas into a formal framework,  \emph{definable, or tame, topological dynamics}.  On the other hand the treatment of $fsg$ groups in \cite{Pillay-fsg}  and the case analysis  of $\SL(2,\R)$ in  \cite{GPP} from this point of view, are rather harder.
\\

%{\bf groups $(G,\mathcal{B})$, and the common intersection, discrete groups, where $\mathcal{B}$ is the collection of all subsets of $G$. 

%In fact the topological case was already done by Hirschfeld \cite{Hirschfeld}, in the language of nonstandard analysis, following earlier work  by Robinson \cite{Robinson}. Also the first author's paper cite{Gismatullin} touches on the same topic. But we see no harm in redoing this, in current general model-theory notation. \\

%Discuss logic topology  in group case maybe????? or in general.
%Let $M$  be a structure, ${\bar M}$ a saturated elementary extension, $X$ a set definable over $M$ and $E$ an equivalence relation on $X$ which is type-definable with boundedly many classes  (at most $2^{|M|}$). Let $E$ be (type) defined by $\{\varphi(x,y):\varphi\in \Phi\}$  where the $\Phi$ has nice closure properties. Then the \emph{logic topology} on $X/E$ is precisely: an open neighbourhood of a point $a/E\in X/E$  is the collection of $E$-classses contained in $\varphi(x,b)({\bar M})$ where $\varphi(x,y)\in \Phi$ and $E(a,b)$. 
%}

In the remainder of this introduction, we recall some very basic model-theoretic notions and constructions.  A good reference for the kind of model theory in the current paper is \cite{Poizat} as well as \cite{NIPI}. Whenever we talk about a topological space we assume Hausdorffness.

We fix a complete theory $T$ in language $L$, a model $M$ of $T$ and a very saturated elementary extension $M^{*}$ of $M$, for example $\kappa$-saturated of cardinality $\kappa$  where $\kappa > 2^{|M|+|L|}$.  For $X$ a definable set in $M^{*}$, definable over $M$  (or even a set definable in $M$),  $S_{X}(M)$ denotes the Stone space of complete types over $M$ which concentrate on (the formula defining) $X$. By a type-definable over $M$ set in $M^{*}$ we mean the common solution set in $M^{*}$ of a collection of formulas over $M$, equivalently an intersection of sets, definable in $M^{*}$ over $M$. Sometimes $L_{M}$ denotes the language $L$ expanded by constants for elements of $M$. \\

We first recall the logic topology on bounded hyperdefinable sets (in a saturated model).

\begin{Definition} \label{def:logic}
Let $X$ be a definable set in the structure $M^*$, definable with parameters from $M$.
\begin{enumerate}
\item[(i)] Suppose $E$ is a type-definable over $M$ equivalence relation with a \emph{bounded} number of classes, that is $<\kappa$ (equivalently $\leq 2^{|M| + |L|}$) many, and $\pi\colon X\to X/E$ the canonical surjection. The \emph{logic topology} on $X/E$ is defined as follows: $Z\subseteq X/E$ is closed if $\pi^{-1}(Z)\subseteq X$ is type-definable over $M$.

\item[(ii)] Suppose $C$ is a compact space, and $f\colon X\to C$ a map. We say that $f$ is {\em definable over $M$}, if for any closed subset $D$ of $C$, $f^{-1}(D)$ is type-definable over $M$.
\end{enumerate}
\end{Definition}

The following is well-known (see e.g. \cite{NIPI}). 

\begin{Lemma} \label{lem:log} Let $X$, $E$ be as in Definition \ref{def:logic} and $C$ be a compact space.
\begin{enumerate}
\item[(i)] The set $X/E$ equipped with the logic topology is a compact space, and $\pi\colon X\to X/E$ is definable over $M$.
\item[(ii)] Conversely, suppose $f\colon X \to C$ is an $M$-definable map. Then:
\begin{enumerate}
\item $D = f(X)$ is closed in $C$,
\item the equivalence relation $E(x,y) \Leftrightarrow f(x) = f(y)$, for $x,y\in X$, is type-definable over $M$ and \emph{bounded},
\item $f$ induces a homeomorphism between $X/E$ equipped with the logic topology and $D$.
\end{enumerate}
\item[(iii)] Let again $f\colon X \to C$ be $M$-definable. Then $f$ factors through the tautological map $a  \mapsto \tp(a/M)$ from $X$ to $S_{X}(M)$, and $C$ is equipped with the quotient topology. %{\bf $f$ is onto?}
\end{enumerate}
\end{Lemma}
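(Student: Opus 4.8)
The plan is to establish (i) by a saturation (compactness) argument, and then to derive both (ii) and (iii) from the single observation that every $M$-definable map factors continuously through the type space $S_X(M)$.

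For (i) I would first check that the logic topology really is a topology on $X/E$: its closed sets are closed under arbitrary intersections, since $\pi^{-1}(\bigcap_t Z_t)=\bigcap_t\pi^{-1}(Z_t)$ is again type-definable over $M$, and under finite unions, since if $P_1=\bigcap_i\alpha_i(M^*)$ and $P_2=\bigcap_j\beta_j(M^*)$ are type-definable over $M$ then $P_1\cup P_2=\bigcap_{i,j}(\alpha_i\vee\beta_j)(M^*)$ is too. Compactness I would get from saturation: given logic-closed $Z_t$ with the finite intersection property, the preimages $\pi^{-1}(Z_t)$ are type-definable over $M$ and (as $\pi$ is onto, every type being realised in the $\kappa$-saturated $M^*$) still have the finite intersection property, so the collection of all formulas defining them is a partial type over the parameter set $M$, of size $|M|<\kappa$; it is finitely satisfiable, hence realised, and the realising point projects into $\bigcap_tZ_t$. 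That $\pi$ is $M$-definable is immediate from the definition of the topology. For Hausdorffness I would separate distinct classes $a/E\neq b/E$ using $\neg E(a,b)$ together with a ``squaring'' compactness argument on $E=\bigcap_iR_i$ to produce an $M$-definable reflexive symmetric $S\supseteq E$ with $(a,b)\notin S\circ S$, and then take the $\pi$-images of the two $S$-neighbourhoods as disjoint closed sets.

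The common engine for (ii) and (iii) is that for $M$-definable $f\colon X\to C$ the map factors as $f=\bar f\circ r$, where $r\colon X\to S_X(M)$ is $x\mapsto\tp(x/M)$. This is well-defined because if $\tp(x/M)=\tp(x'/M)$ but $f(x)\neq f(x')$, then by regularity of the compact Hausdorff space $C$ there is a closed $D\subseteq C$ with $f(x)\in D\not\ni f(x')$; but $f^{-1}(D)$ is type-definable over $M$, hence a union of complete types over $M$, so it would contain $x'$ once it contains $x$, a contradiction. Since $r$ is onto, $\bar f$ is defined on all of $S_X(M)$, and it is continuous because $\bar f^{-1}(D)$ is the closed set of types extending the partial type defining $f^{-1}(D)$; this is (iii). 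For (ii)(a), $D=f(X)=\bar f(S_X(M))$ is the continuous image of the compact $S_X(M)$, hence compact and so closed in the Hausdorff $C$. For (ii)(b) I would write $E(x,y)\Leftrightarrow f(x)=f(y)$ as $(f\times f)^{-1}(\Delta_C)$ with $\Delta_C$ closed in $C\times C$, and verify that $f\times f$ is $M$-definable: a closed subset of $C\times C$ is an intersection of sets of the form $(\text{closed}\times C)\cup(C\times\text{closed})$, whose $f\times f$-preimages are type-definable over $M$ by the same disjunction identity $(P_1\times X)\cup(X\times P_2)=\bigcap_{i,j}(\alpha_i(x)\vee\beta_j(y))(M^*)$ as in (i); the classes biject with $D$, and $|D|\le|S_X(M)|\le 2^{|L_M|}<\kappa$, so $E$ is bounded. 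Finally (ii)(c) follows because the induced continuous bijection $X/E\to D$ has compact domain by (i) and Hausdorff target, hence is a homeomorphism.

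The main obstacle is Hausdorffness in (i): the separation sketched above only works when the two classes are themselves type-definable over $M$ (equivalently when $\equiv_M$ refines $E$), since otherwise the $S$-neighbourhoods of $a$ and $b$ cannot be taken over $M$ and their $\pi$-images need not be logic-closed. This condition is exactly what is guaranteed for the kernels of $M$-definable maps in (ii), and for the group quotients used later, so I would flag it as the point requiring care. The neatest technical step is the type-definability of $E$ in (ii)(b), which reduces, via the product map, to the elementary fact that finite unions of type-definable-over-$M$ sets are again type-definable over $M$.
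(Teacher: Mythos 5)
Your treatment of (ii) and (iii) is correct and complete (the factorization through $S_{X}(M)$, the product-map trick for the type-definability of the kernel, and the compact-to-Hausdorff bijection argument are all sound), and your compactness argument for (i) is fine; note that the paper itself gives no proof of this lemma, quoting it as well-known from \cite{NIPI}, so your proposal must stand on its own. The genuine gap is exactly where you placed your ``flag'': Hausdorffness of $X/E$ in (i). You correctly reduce it to the condition that equality of types over $M$ refines $E$ (equivalently, that each $E$-class is type-definable over $M$), but you then leave this as a hypothesis which ``happens to hold'' for kernels of $M$-definable maps and for the group quotients used later. The lemma, however, asserts Hausdorffness for an \emph{arbitrary} bounded type-definable over $M$ equivalence relation $E$, and it is used later in that generality (for $G^{*}/(G^{*})^{00}_{M}$, for the ambits $G^{*}/E$ in Proposition 2.2 and Lemma 3.8). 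So as written your proof of (i) is incomplete: the condition you flag is not an extra hypothesis to be imposed, it is a theorem, and proving it is the real content of part (i).

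The missing fact is: since $M$ is a \emph{model} (not merely a parameter set), every bounded, $M$-invariant (in particular type-definable over $M$) equivalence relation $E$ is refined by equality of types over $M$. Sketch: suppose $\tp(a/M)=\tp(b/M)=p$, and let $c$ realize over $M\cup\{a,b\}$ a global coheir extension of $p$. Then each of $(a,c)$ and $(b,c)$ extends to an infinite $M$-indiscernible sequence. If two members of an $M$-indiscernible sequence were $E$-inequivalent, then some $M$-definable $R\supseteq E$ from the partial type defining $E$ would fail on that pair, hence by indiscernibility on every pair; stretching the sequence to length $\kappa$ by compactness would yield $\kappa$ pairwise $E$-inequivalent elements, contradicting boundedness. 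Hence $E(a,c)$ and $E(b,c)$, so $E(a,b)$. Once you have this, you do not even need the ``squaring'' separation argument: your own engine from (ii)/(iii) finishes (i) cleanly. The map $\pi\colon X\to X/E$ factors through $S_{X}(M)$, the induced relation $R$ on $S_{X}(M)$ (``some, equivalently every, realizations are $E$-equivalent'') is the image of the closed set $[E]\subseteq S_{X\times X}(M)$ under the continuous map $\tp(x,y/M)\mapsto(\tp(x/M),\tp(y/M))$, hence is a closed equivalence relation; and the logic topology on $X/E$ is precisely the quotient topology of $S_{X}(M)/R$. A quotient of a compact Hausdorff space by a closed equivalence relation is compact Hausdorff, which gives all of (i).
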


\begin{Fact} Suppose $G$ is a group definable in $M$ and $G^*$ is the interpretation in $M^*$ of the formulas defining $G$.
\begin{enumerate}
\item[(i)] Let $N$ be a normal, type-definable over $M$, bounded index subgroup of $G^*$. Then $G^*/N$ equipped with the logic topology is a compact group.
\item[(ii)] There is a smallest, type-definable over $M$, subgroup of $G$. It is called $({G^*})^{00}_{M}$, and is normal in $G^*$.
\end{enumerate}
\end{Fact}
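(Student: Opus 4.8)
The two parts both run on Lemma~\ref{lem:log} together with the definability of the group operations. Throughout write $\lambda = |M|+|L|$.

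For (i), since $N$ is type-definable over $M$ the coset relation $E_N(x,y)\Leftrightarrow x^{-1}y\in N$ is a type-definable over $M$ equivalence relation, and as $[G^*:N]$ is bounded it has boundedly many classes; its quotient is exactly $G^*/N$. Thus Lemma~\ref{lem:log}(i) already gives that $G^*/N$ with the logic topology is a compact (Hausdorff) space and that $\pi\colon G^*\to G^*/N$ is $M$-definable. It remains to check the group operations are continuous. The plan is to observe that the composite $G^*\times G^*\to G^*/N$, $(x,y)\mapsto \pi(xy)$, is $M$-definable: the preimage of a closed $Z$ is $\{(x,y):xy\in\pi^{-1}(Z)\}$, type-definable over $M$ because multiplication is definable and $\pi^{-1}(Z)$ is type-definable over $M$. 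Since $N$ is normal this map is constant on $E_N\times E_N$-classes, so by Lemma~\ref{lem:log} it factors through a continuous map on the quotient, which is precisely multiplication on $G^*/N$ once one checks that the logic topology on $(G^*/N)^2$ coincides with the product topology (both are compact Hausdorff and the two coordinate projections are continuous for the logic topology, so the identity is a homeomorphism). The same argument applied to $x\mapsto\pi(x^{-1})$ handles inversion, and (i) follows.

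For (ii), let $\mathcal{H}$ be the family of all type-definable over $M$ subgroups of $G^*$ of bounded index, and put $N_0=\bigcap\mathcal{H}$. Then $N_0$ is a subgroup, and it is automatically type-definable over $M$: there are only $\lambda$ many $L_M$-formulas in play, so any intersection of conditions over $M$ is cut out by a single partial type over $M$. Minimality among members of $\mathcal{H}$ is then immediate once we show $N_0\in\mathcal{H}$, i.e.\ that $N_0$ has bounded index.

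The bounded-index claim is the crux. The plan is to invoke the standard counting bound that a bounded type-definable over $M$ equivalence relation has at most $2^\lambda$ classes (so each $H\in\mathcal{H}$ has index $\le 2^\lambda$), together with the fact that there are at most $2^\lambda$ type-definable over $M$ sets, whence $|\mathcal{H}|\le 2^\lambda$. The diagonal embedding $G^*/N_0\hookrightarrow\prod_{H\in\mathcal{H}}G^*/H$ then bounds $[G^*:N_0]$ below $\kappa$ (a finer analysis gives the sharp bound $2^\lambda$). Finally, for normality I would pass to the normal core $N_0''=\bigcap_{g\in G^*}gN_0g^{-1}$. Writing $N_0=\bigcap_i\phi_i(G^*)$ with each $\phi_i$ over $M$, the key point is that $\forall g$ commutes with the conjunction, so $N_0''=\bigcap_i\{x:\forall g\in G^*\ \phi_i(g^{-1}xg)\}$ is an intersection of single $L_M$-formulas, hence type-definable over $M$; it is normal by construction and of bounded index since $N_0$ has only boundedly many distinct conjugates. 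Thus $N_0''\in\mathcal{H}$, and from $N_0''\subseteq N_0\subseteq N_0''$ we get $N_0=N_0''=(G^*)^{00}_M$, which is therefore normal. The main obstacle is exactly the bounded-index step, which rests on the counting bound for classes of bounded type-definable equivalence relations (see \cite{NIPI}).
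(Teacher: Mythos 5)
The paper offers no proof of this statement: it is labelled a \emph{Fact} and treated as well-known background (with \cite{NIPI} as the implicit reference), so there is no argument of the authors' to compare yours against step by step. Your reconstruction is the standard proof and is essentially correct. For (i), the reduction to Lemma \ref{lem:log}(i) via the coset relation $E_N$ is right; normality of $N$ is exactly what makes $(x,y)\mapsto\pi(xy)$ constant on $E_N\times E_N$-classes; continuity of the induced map follows directly from the definition of the logic topology (the preimage of a closed set under the quotient map is the $F$-preimage, which is type-definable over $M$); and your identification of the logic topology on $(G^*\times G^*)/(E_N\times E_N)$ with the product topology on $(G^*/N)^2$ (a continuous bijection from a compact space to a Hausdorff space is a homeomorphism, continuity into the product being checked coordinatewise) is precisely the gluing step most sketches omit. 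For (ii), intersecting the family $\mathcal{H}$ and then replacing the intersection by its normal core, made type-definable over $M$ by pulling $\forall g$ inside the infinite conjunction so that each condition becomes the single $L_M$-formula $\forall g\,(g\in G\rightarrow\phi_i(g^{-1}xg))$, is likewise the classical argument (the count of conjugates being bounded by $[G^*:N_{G^*}(N_0)]\le[G^*:N_0]$).

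One step is stated more strongly than your argument supports: you claim the diagonal embedding $G^*/N_0\hookrightarrow\prod_{H\in\mathcal{H}}G^*/H$ bounds $[G^*:N_0]$ \emph{below $\kappa$}. What it actually gives is $[G^*:N_0]\le(2^\lambda)^{2^\lambda}=2^{2^\lambda}$, and the paper only guarantees $\kappa>2^\lambda$, so this need not be $<\kappa$. The repair is the dichotomy built into the parenthetical of Definition \ref{def:logic}(i) (``$<\kappa$, equivalently $\le 2^{|M|+|L|}$''): a type-definable over $M$ equivalence relation either has at most $2^\lambda$ classes or has at least $\kappa'$ classes in every $\kappa'$-saturated model. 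Since $2^{2^\lambda}$ is a bound independent of the ambient saturated model, the second alternative is impossible for $E_{N_0}$, so its number of classes is in fact at most $2^\lambda<\kappa$; the same remark is needed for the index of the normal core. This is exactly the ``finer analysis'' you allude to, and it does rest on the Erd\H{o}s--Rado counting argument you defer to \cite{NIPI}; with it made explicit, your proof is complete.
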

 
We also recall:

\begin{Definition}
\begin{enumerate}
\item[(i)] A type $p(x)\in S(M)$ is \emph{definable} if for any $\varphi(x,y)\in L$, $\{b\in M\colon\varphi(x,b)\in p\}$ is a definable set in $M$.
\item[(ii)] If $A\supseteq M$, and $q(x)\in S(A)$, then $q$ is said to be \emph{finitely satisfiable in $M$}  (or equivalently a \emph{coheir of $q|M$}) if every formula in $q$ is satisfied by some element or tuple from $M$.
\item[(iii)] If $A\supseteq M$ and $q(x)\in S(A)$, then $q$ is said to be an \emph{heir of $q|M$} if for any $L_{M}$-formula $\varphi(x,y)$ such that $\varphi(x,a)\in q$ for some $a\in A$, there is $m\in M$ such that $\varphi(x,m)\in q$. 
\end{enumerate}
\end{Definition}

\begin{Fact} 
\begin{enumerate}
\item[(i)] $\tp(a/M,b)$ is a coheir of $\tp(a/M)$ if and only if $\tp(b/M,a)$ is an heir of $\tp(b/M)$.
\item[(ii)] $p(x)\in S(M)$ is definable if and only if $p$ has a unique heir over any $A\supseteq M$.
\end{enumerate}
\end{Fact}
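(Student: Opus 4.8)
For part (i), my plan is simply to unwind both definitions and observe that they are the same family of conditions after a change of variables. Writing $a,b$ as (possibly infinite) tuples and letting $\varphi(x,y)$ range over $L_{M}$-formulas, ``$\tp(a/M,b)$ is a coheir of $\tp(a/M)$'' unpacks, formula by formula, to: whenever $M^{*}\models\varphi(a,b)$ there is $m\in M$ with $M^{*}\models\varphi(m,b)$. Meanwhile ``$\tp(b/M,a)$ is an heir of $\tp(b/M)$'' unpacks to: whenever $M^{*}\models\psi(b,a)$ there is $m\in M$ with $M^{*}\models\psi(b,m)$. Putting $\psi(y,x):=\varphi(x,y)$ turns the second condition into the first, and as $\varphi$ ranges over all $L_{M}$-formulas so does $\psi$; thus the two statements are literally the same indexed family of requirements, and each direction of the equivalence follows with no further work. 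I do not expect any obstacle here.

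For the forward direction of part (ii), suppose $p$ is definable, with $\varphi$-definitions $d_{p}\varphi(y)\in L_{M}$ characterising $\{b\in M:\varphi(x,b)\in p\}$. Existence of at least one heir over any $A\supseteq M$ is standard (dually to coheirs, which exist by compactness, via part (i)), so the content is uniqueness. Given an heir $q\in S(A)$ of $p$ and $b\in A$, I would show $\varphi(x,b)\in q$ if and only if $M^{*}\models d_{p}\varphi(b)$, which pins $q$ down as the definable extension. If $\varphi(x,b)\in q$ but $M^{*}\models\neg d_{p}\varphi(b)$, apply the heir property to the $L_{M}$-formula $\theta(x,y):=\varphi(x,y)\wedge\neg d_{p}\varphi(y)$, noting $\theta(x,b)\in q$; this yields $m\in M$ with $\theta(x,m)\in p$, i.e.\ $\varphi(x,m)\in p$ together with $M\models\neg d_{p}\varphi(m)$, contradicting the defining property of $d_{p}\varphi$. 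The reverse inclusion follows by completeness of $q$ (using $d_{p}(\neg\varphi)=\neg d_{p}\varphi$). Hence $q$ is uniquely determined.

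For the converse I would argue the contrapositive and manufacture two distinct heirs. If $p$ is not definable, fix $\varphi$ for which $R=\{b\in M:\varphi(x,b)\in p\}$ is not $M$-definable. Since types realised in $M$ are dense in the Stone space $S_{y}(M)$, non-definability of $R$ means the closures of $\{\tp(b/M):b\in R\}$ and $\{\tp(b/M):b\in M\setminus R\}$ are not disjoint (otherwise compactness separates them by a formula defining $R$); pick $r$ in the intersection and realise it by $c\in M^{*}$, so every $\psi(y)\in r$ meets both $R$ and $M\setminus R$. The key step is then to check that $p(x)\cup\{\varphi(x,c)\}$ extends to an heir of $p$ over $Mc$, and symmetrically for $\neg\varphi(x,c)$. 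Concretely I would verify that $p(x)\cup\{\varphi(x,c)\}$ together with all ``forbidden'' formulas $\neg\psi(x,c)$, for $\psi\in L_{M}$ with $\neg\psi(x,m)\in p$ for every $m\in M$, is consistent: any inconsistency would be witnessed by some $\sigma(c)$ with $\sigma\in r$ of the form $\forall x\,(\theta(x)\wedge\varphi(x,y)\to\bigvee_{i}\psi_{i}(x,y))$ with $\theta\in p$; reflecting $\sigma$ down to some $b\in R\subseteq M$ via $r\in\overline{R}$ forces $\psi_{i}(x,b)\in p$ for some $i$, contradicting $\neg\psi_{i}(x,b)\in p$. Any completion of this partial type is then an heir containing $\varphi(x,c)$; the symmetric construction using $r\in\overline{M\setminus R}$ gives an heir containing $\neg\varphi(x,c)$, so $p$ has two heirs over $Mc$, contradicting uniqueness. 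The main obstacle is precisely this consistency/heir-extendibility computation together with the Stone-space separation that produces $c$; everything else is routine.
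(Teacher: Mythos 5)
The paper states this Fact without proof, treating it as standard background from the classical theory of heirs and coheirs (cf.\ the Poizat reference), so there is no in-paper argument to compare against; your proposal supplies the standard textbook proof, and it is correct. Part (i) is indeed just the observation that the two conditions unwind to the identical family of requirements under the swap $\psi(y,x):=\varphi(x,y)$; in (ii) your forward direction correctly forces any heir to agree with the defining schema via the formula $\varphi(x,y)\wedge\neg d_{p}\varphi(y)$, and your converse (a type $r$ adherent in $S_{y}(M)$ to both $R$ and $M\setminus R$, plus the consistency check that $p(x)\cup\{\varphi(x,c)\}\cup\{\neg\psi(x,c):\neg\psi(x,m)\in p\ \text{for all}\ m\in M\}$ has a completion, which is then automatically an heir) is exactly the standard construction of two distinct heirs. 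The only point worth flagging is that the forward direction also needs \emph{existence} of an heir over every $A\supseteq M$, which you correctly note is standard; alternatively one can check directly that the schema-defined extension $\{\varphi(x,b):b\in A,\ M^{*}\models d_{p}\varphi(b)\}$ is itself a consistent, complete heir, which closes that loop without appealing to the general existence theorem.
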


The property, for a given theory $T$, that all complete types over all models are definable (equivalently all complete types over a sufficiently saturated model are definable), is very strong, and equivalent to \emph{stability} of $T$.
On the other hand there are certain important structures $M$ over which all complete types are definable, even though $\Th(M)$ is unstable. Examples are $(\R,+,\cdot)$ and $({\Q}_{p},+,\cdot)$. There is also a class of first order theories, properly containing the class of stable theories, for which certain ``mild'' expansions of arbitrary models have the property that all types over them are definable:  $T$ is (or has) $NIP$, if (working in a saturated model $M^{*}$) for any $L$-formula $\varphi(x,y)$, indiscernible sequence $(a_{i}:i<\omega)$ and $b\in M^{*}$, the truth value of $\varphi(a_{i},b)$ stabilizes as $i\to \infty$. Given a model $M$ of any theory $T$, by $M^{ext}$ we mean the structure whose universe is $M$ and which has predicates for all \emph{externally definable} subsets of $M^{n}$: where $X\subseteq M^{n}$ is said to be externally definable,  if there is a formula $\psi(x)$ possibly with parameters from $M^{*}$ such that $X = \{a\in M^{n} : M^{*}\models \psi(a)\}$. Note that $M^{ext}$ has (essentially) constants for all elements.  The following is due to Shelah \cite{Shelah}, but see also \cite{Pillay} for a simplified account. 

\begin{Fact} Assume that $T$ has $NIP$. Then for any model $M$ of $T$, $\Th(M^{ext})$ has quantifier elimination and $NIP$. Moreover all complete types (in any number of variables) over $M^{ext}$ are definable. 
\end{Fact}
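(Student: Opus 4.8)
The plan is to prove the three assertions in the order in which they are stated --- quantifier elimination for $\Th(M^{ext})$, then $NIP$, then definability of all types --- since each builds on the previous. For quantifier elimination, the key observation is that $M^{ext}$ has a name for every element of $M$ and an atomic predicate for every externally definable subset of every $M^n$; hence the atomic definable sets of $M^{ext}$ are exactly the externally definable sets, and this class is visibly closed under finite Boolean combinations (take conjunctions, disjunctions and negations of the $M^*$-formulas witnessing external definability). Consequently $\Th(M^{ext})$ admits quantifier elimination if and only if the externally definable sets are closed under coordinate projection, i.e. whenever $X \subseteq M^{n+1}$ is externally definable, so is $\{a \in M^n : (\exists c \in M)\ (a,c) \in X\}$. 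I would isolate this as the \emph{only} thing to prove.

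The projection step is where $NIP$ is essential and is the main obstacle. Write $X = \{(a,c) \in M^{n+1} : M^* \models \phi(a,c;b)\}$ with $\phi \in L$ and $b \in M^*$. The naive candidate $\exists y\,\phi(x,y;b)$ fails, because a witness to the projection computed in $M^*$ need not lie in $M$: for instance $\phi(x,y;b) \equiv (y=b)$ with $b \notin M$ projects to $\emptyset$ over $M$ but to all of $M$ over $M^*$. The repair is an honest-definition lemma, valid in $NIP$ theories: passing to a sufficiently saturated $N \succeq M^*$, there are $\psi(x,y;z) \in L$ and $d \in N$ such that $\psi(x,y;d)$ agrees with $\phi(x,y;b)$ on all of $M^{n+1}$, satisfies $\psi(N^{n+1};d) \subseteq \phi(N^{n+1};b)$, and --- this is the crucial extra feature, obtained by building $d$ from a type finitely satisfiable in $M$ --- has the property that for $a \in M^n$ the formula $\psi(a,y;d)$ is consistent only if it is satisfied by some $c \in M$. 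Granting this, the projection of $X$ equals $\{a \in M^n : N \models \exists y\,\psi(a,y;d)\}$, which is externally definable. Establishing this lemma --- via bounds on the alternation number and finite $VC$ dimension coming from $NIP$, as in Shelah's original combinatorial argument or the honest definitions of Chernikov and Simon --- is the technical core of the whole Fact.

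With quantifier elimination in hand, $NIP$ of $\Th(M^{ext})$ is comparatively soft: every quantifier-free $L^{ext}$-formula is a Boolean combination of instances of $M^*$-formulas, each of which is $NIP$ as a formula of $T$, and finiteness of $VC$ dimension is preserved under Boolean combinations; so no $L^{ext}$-formula can have infinite alternation against an indiscernible sequence.

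Finally, definability of all complete types over $M^{ext}$ I would deduce from the previous two points together with the Fact that a type is definable precisely when it has a unique heir. By quantifier elimination every $M^{ext}$-definable set is the trace on $M$ of an $M^*$-definable set, so a complete type $p \in S(M^{ext})$ is determined by the coheir (finitely-satisfiable-in-$M$) data it records; since $M^{ext}$ already names every element and carries predicates for all such external traces, the honest-definition machinery applied uniformly in the parameter yields, for each formula $\varphi(x,y)$ of $L^{ext}$, an $L^{ext}$-definition of $\{b : \varphi(x,b) \in p\}$, equivalently uniqueness of the heir of $p$. I expect the uniform version of the honest-definition lemma needed here to be essentially the same obstacle as in the projection step, so that once that lemma is established the remaining arguments are formal.
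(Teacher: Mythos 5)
The paper offers no proof to compare against: the statement is quoted as a Fact, attributed to Shelah \cite{Shelah}, with \cite{Pillay} cited for a simplified account. So your proposal can only be measured against the published proofs (Shelah's, Pillay's, and the honest-definitions proof of Chernikov and Simon). Your decomposition agrees with all of them: since $M^{ext}$ names its elements and its atomic sets (the externally definable sets) form a Boolean algebra, quantifier elimination is exactly closure of externally definable sets under projection; $NIP$ of $\Th(M^{ext})$ is then soft (traces of $NIP$ formulas have finite VC dimension, this is preserved under Boolean combinations, and non-shattering of $k$-element sets is expressible by a first-order sentence, so transfers to all models of $\Th(M^{ext})$); and definability of types comes last.

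The gap is at the one point where you commit to a mechanism. The ``crucial extra feature'' --- for $a\in M^{n}$, if $\psi(a,y;d)$ is consistent then it has a solution in $M$ --- is \emph{not} obtained by building $d$ from a type finitely satisfiable in $M$. If $\tp(d/M^{*})$ is finitely satisfiable in $M$ and $N\models\exists y\,\psi(a,y;d)$ with $a\in M^{n}$, then finite satisfiability applied to the formula $\exists y\,\psi(a,y;z)\in\tp(d/M^{*})$ produces only some $d_{0}\in M^{|z|}$ with $M\models\exists y\,\psi(a,y;d_{0})$: a witness for the wrong parameter, which is useless. What actually yields the feature, in the Chernikov--Simon approach, is a pair construction: one passes to a saturated elementary extension $(N',N)$ of the pair $(M^{*},M)$ and finds $d$ in the predicate part $N$ --- note that $N$ is an elementary extension of $M$, \emph{not} of $M^{*}$, so your requirement $d\in N\succeq M^{*}$ is already at odds with the construction --- with trace agreement and with the inclusion $\psi(\,\cdot\,;d)\subseteq\phi(\,\cdot\,;b)$ holding on tuples from $N$ only. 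If then $c\in N$ satisfies $\psi(a,c;d)$, one gets $(N',N)\models\exists y\,(P(y)\wedge\phi(a,y;b))$, hence by elementarity of pairs $(M^{*},M)$ satisfies the same sentence, giving $c'\in M$ with $M^{*}\models\phi(a,c';b)$; trace agreement then converts $c'$ into a solution of $\psi(a,y;d)$ in $M$. So the witness property comes from pair elementarity (equivalently, from the form of honest definitions in which $\psi(x,y;d)$ entails every $L_{M}$-formula containing the trace of $\phi$), not from finite satisfiability. Your clause (2), the inclusion $\psi(N^{n+1};d)\subseteq\phi(N^{n+1};b)$ over a saturated $N\succeq M^{*}$, is likewise not what the machinery provides; fortunately it is also not needed, since trace agreement plus the witness property already give $\pi(X)=\{a\in M^{n}: N\models\exists y\,\psi(a,y;d)\}$, and these two are properties of $\tp(d/M)$ alone, hence realizable by some $d$ inside any saturated $N\succeq M^{*}$. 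Since you defer the proof of the lemma to Shelah or Chernikov--Simon anyway, the outline is repairable; but as written, the justification of its key clause fails and the lemma should be restated in one of the two correct forms.

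A further remark: your third step is much softer than you anticipate; no uniform honest definitions and no heir analysis are needed, because definability of all types over $M^{ext}$ is a formal consequence of quantifier elimination. Any $p(x)\in S(M^{ext})$ is finitely satisfiable in $M$ (every complete type over a model is finitely satisfiable in it), hence is the limit of an ultrafilter $\mathcal{U}$ on $M^{|x|}$: a formula lies in $p$ iff its solution set in $M^{ext}$ lies in $\mathcal{U}$. Let $c$, in a larger monster $M^{**}\succ M^{*}$, realize the $L$-type $\{\theta(x)\in L_{M^{*}} : \{a\in M^{|x|}: M^{*}\models\theta(a)\}\in\mathcal{U}\}$. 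By quantifier elimination every $L^{ext}$-formula $\varphi(x,y)$ is, modulo $\Th(M^{ext})$, an atomic predicate $R_{X}$ where $X$ is the trace of some $\theta(x,y;e)$ with $e\in M^{*}$; then $\{b\in M^{|y|}:\varphi(x,b)\in p\}=\{b\in M^{|y|}: M^{**}\models\theta(c,b;e)\}$, the trace on $M$ of an $L$-formula with parameters in $M^{**}$, hence externally definable, hence definable in $M^{ext}$, as required.
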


In fact Shelah suggests, in Thesis 0.5 of \cite{dependentdreams}, that in studying an $NIP$ theory $T$, one should from the start work with the class of externally definable sets.  This in some sense provides some justification for our ``definability of types'' assumption when we introduce ``definable topological dynamics'' in Section 3.\\

\section{The topological case}
We give a brief model-theoretic description of the topological case. We refer the reader to \cite{Uspenskij} for a nice treatment of topological dynamics.
\\

Let $G$ be a topological group. As mentioned in the introduction $G$ has a unique ``universal compactification'', called \emph{Bohr compactification}. Here is a summary of a model-theoretic account%(see also \cite[Section 5]{abscon} for an alternative approach)
. Let $M$ be a structure whose universe is $G$, and has a symbol for the group operation, and, for each open subset of $G$ a predicate symbol $P_{U}$ whose interpretation in $M$ is $U$ (and maybe more). We suppose the language $L$ of $M$ has cardinality at most $2^{|M|}$. Let $M^{*}$ be a very saturated  elementary extension of $M$ and $G^{*}$ the corresponding group. Let $U^{*}$ denote the interpretation of $P_{U}$ in $M^{*}$. The following proposition immediately follows from \cite[Theorem 5.6]{abscon}.

\begin{Proposition} Let  $N$ be the smallest subgroup of $G^{*}$ which has bounded index, and  is an intersection of some of the $U^{*}$'s. Then:
%\newline
\begin{enumerate}
\item[(i)] $N$ is normal  (hence $G^{*}/N$ with the logic topology is a compact group),
\item[(ii)] The mapping $f$ from $G$ to $G^{*}/N$ induced by the identity embedding of $G$ in $G^{*}$ is continuous and is the universal compactification of the topological group $G$.
\end{enumerate}
\end{Proposition}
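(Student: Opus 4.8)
The plan is to deduce part (i) (existence, and normality, of $N$) from the general theory of model-theoretic components, and then to check directly that $(G^{*}/N,f)$ satisfies the universal property defining the Bohr compactification. First I would check that the family $\mathcal{S}$ of all subgroups of $G^{*}$ that are intersections of some of the $U^{*}$ and have bounded index is nonempty (it contains $G^{*}$, the empty intersection) and closed under arbitrary intersection: an intersection of intersections of $U^{*}$ is an intersection of $U^{*}$, an intersection of subgroups is a subgroup, and since every member of $\mathcal{S}$ is type-definable over $M$ of bounded index it contains $(G^{*})^{00}_{M}$ (Fact (ii)). Hence $N:=\bigcap\mathcal{S}$ again contains $(G^{*})^{00}_{M}$, so has bounded index, and is the smallest element of $\mathcal{S}$. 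As $1\in N\subseteq U^{*}$ for each $U$ in the defining family, each such $U$ is an open neighbourhood of the identity.

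For normality I would first get invariance under the standard points: for $g\in G$ conjugation is a self-homeomorphism of $G$, so $gU^{*}g^{-1}=(gUg^{-1})^{*}$ by elementarity, whence $gNg^{-1}$ is again an intersection of $U^{*}$ of bounded index, lies in $\mathcal{S}$, and minimality forces $gNg^{-1}=N$. To upgrade this to normality in all of $G^{*}$ I would pass to the compact group $\bar{G}:=G^{*}/(G^{*})^{00}_{M}$ (Fact (i)); there $\bar{N}:=N/(G^{*})^{00}_{M}$ is a \emph{closed} subgroup, its normaliser is a closed subgroup of $\bar{G}$, it contains the (dense, as in the next paragraph) image of $G$, and so equals $\bar{G}$. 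Thus $\bar{N}$, hence $N$, is normal, and by the Fact $G^{*}/N$ with the logic topology is a compact group. Now $f=\pi|_{G}$ (with $\pi\colon G^{*}\to G^{*}/N$) is a homomorphism; its image is dense because a closed $Z\supseteq f(G)$ has $\pi^{-1}(Z)$ type-definable over $M$ and containing every standard point, hence (a formula over $M$ true on all of $G$ is true on $G^{*}$) equal to $G^{*}$. For continuity I would reduce via left translations to the point $1$: given open $V\ni N$, its complement $C$ is type-definable over $M$ and disjoint from $N=\bigcap_{U}U^{*}$, so by saturation a finite subintersection $W^{*}=U_{1}^{*}\cap\dots\cap U_{k}^{*}$ is already disjoint from $C$, and then $W=U_{1}\cap\dots\cap U_{k}$ is an open neighbourhood of $1$ with $f(W)\subseteq\pi(W^{*})\subseteq V$.

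For universality, let $h\colon G\to K$ be any compactification. Since $h$ is continuous, $h^{-1}(O)$ is open in $G$ for open $O\subseteq K$, hence equals some $U$, so $h$ is $M$-definable and extends canonically to $h^{*}\colon G^{*}\to K$ determined by $(h^{*})^{-1}(D)=(h^{-1}(D))^{*}$ for closed $D$, using compactness and Hausdorffness of $K$. One checks $h^{*}$ is a continuous homomorphism whose image is closed (Lemma (ii)(a)) and contains the dense set $h(G)$, hence is all of $K$. Because $K$ is $T_{1}$ we have $\{1\}=\bigcap\{O: O\text{ open},\,1\in O\}$, so $\ker h^{*}=(h^{*})^{-1}(1)=\bigcap_{O\ni 1}(h^{-1}(O))^{*}$ is an intersection of $U^{*}$; its index equals $|K|\le 2^{2^{|M|}}<\kappa$, so $\ker h^{*}\in\mathcal{S}$ and minimality gives $N\subseteq\ker h^{*}$. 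Therefore $h^{*}=\bar{h}\circ\pi$ for a continuous homomorphism $\bar{h}\colon G^{*}/N\to K$, whence $\bar{h}\circ f=h$; uniqueness of $\bar{h}$ follows from density of $f(G)$ and Hausdorffness of $K$. This is exactly the universal property of the Bohr compactification.

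I expect two delicate points. The first is the passage from $G$-normality to $G^{*}$-normality of $N$, which is the reason for working inside $\bar{G}$ and which crucially uses $N\supseteq(G^{*})^{00}_{M}$ together with density of the image of $G$. The second, and probably the \emph{main obstacle}, is the construction of the extension $h^{*}$ and the verification that it is a group homomorphism: although $K$ is merely an external compact group and not interpretable, the identities defining $h^{*}$ transfer because $(-)^{*}$ commutes with finite Boolean combinations of $M$-definable sets and $h$ respects multiplication on the standard part. This is the step that most repays careful writing, and it is presumably what \cite{abscon}, Theorem 5.6 packages cleanly, making the proposition follow at once.
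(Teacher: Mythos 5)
The paper gives no argument at all for this proposition: it is stated to follow immediately from Theorem 5.6 of \cite{abscon}. So your self-contained proof is necessarily a different route, and its skeleton is sound and close in spirit to the paper's later treatment of the definable category: every member of your family $\mathcal{S}$ contains $(G^{*})^{00}_{M}$, so $N$ exists; normality of $N$ follows since the normaliser of the closed subgroup $N/(G^{*})^{00}_{M}$ is closed in the compact group $G^{*}/(G^{*})^{00}_{M}$ and contains the dense image of $G$; continuity at $1$ follows by saturation (a finite subintersection of the $U^{*}$ already avoids the preimage of a closed set missing $N$) and then translates; and universality is obtained by extending an arbitrary compactification $h\colon G\to K$ to an $M$-definable surjection $h^{*}\colon G^{*}\to K$ --- legitimate because every open (hence every closed) subset of $G$ is named by a predicate, so $h$ is definable in the sense of Definition \ref{def:nonsat} and Lemma \ref{lem:extend}(i) applies --- and showing $N\subseteq\ker h^{*}$. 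This is essentially the topological-group analogue of the paper's proof that $G^{*}/(G^{*})^{00}_{M}$ is the universal \emph{definable} compactification.

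Two steps need repair. First, the identity you use to pin down $h^{*}$, namely $(h^{*})^{-1}(D)=(h^{-1}(D))^{*}$ for closed $D$, is false in general: $h^{*}$ is surjective (its image is closed and contains the dense set $h(G)$), so for $D=\{k\}$ with $k\in K\setminus h(G)$ the left side is nonempty while the right side is empty. What is true is the sandwich: for open $O\subseteq K$, $(h^{*})^{-1}(O)\subseteq (h^{-1}(O))^{*}\subseteq (h^{*})^{-1}(\overline{O})$ (the first inclusion because a point outside $(h^{-1}(O))^{*}$ realizes the formula defining $h^{-1}(K\setminus O)$, forcing its image into the closed set $K\setminus O$; the second because $h^{*}(c)\in\overline{h(\varphi(M))}$ for every $\varphi\in\tp(c/M)$). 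Intersecting over open $O\ni 1$ and using $\bigcap_{O}O=\bigcap_{O}\overline{O}=\{1\}$ recovers exactly your formula $\ker h^{*}=\bigcap_{O\ni 1}(h^{-1}(O))^{*}$, so the conclusion stands, but as written the step is wrong. Second, you leave the homomorphism property of $h^{*}$ as an acknowledged obstacle with only a transfer-style hint; that is a genuine omission, but it is precisely the computation the paper carries out for definable compactifications: $h^{*}(a)\cdot h^{*}(b)=\bigcap_{\varphi\in p}\overline{h(\varphi(M))}\cdot\bigcap_{\psi\in q}\overline{h(\psi(M))}=\bigcap_{\varphi,\psi}\overline{h(\varphi(M)\psi(M))}\ni h^{*}(ab)$, using that $\overline{A}\cdot\overline{B}=\overline{A\cdot B}$ for subsets of a compact group; it can be filled in verbatim. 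A final quibble: your bound $|K|\le 2^{2^{|M|}}$ need not lie below the paper's $\kappa>2^{|M|+|L|}$; it is cleaner to bound $|K|\le 2^{w(K)}\le 2^{|L|}$, since the weight of $K$ is at most the number of open subsets of $G$, each of which is named in $L$.
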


\vspace{5mm}
\noindent
Now we consider topological dynamics. $G$ remains a topological group. By a $G$-flow $(X,G)$ we mean a continuous action of $G$ on a compact space $X$  (i.e. an action of $G$ on $X$ such that  the corresponding function $G\times X \to X$ is continuous).  Note that for $X$ a compact space, the space $\Homeo(X)$ of homeomorphisms of $X$ equipped with the compact-open topology, is a compact group. Moreover a $G$-flow $(X,G)$ is precisely given by a continuous homomorphism from $G$ to $\Homeo(X)$. Note that if $x\in X$, then the map taking $g\in G$ to $g\cdot x \in X$ is continuous. 
By a \emph{$G$-ambit} $(X,x_{0},G)$ we mean a $G$-flow $(X,G)$ together with a point $x_{0}\in X$ such that the orbit $G\cdot x_{0}$ is dense in $X$. There is a \emph{universal $G$-ambit}, sometimes called $S(G)$, with distinguished point $e$ say: for every $G$-flow $(X,G)$ and $p\in X$ there is a unique map of $G$-flows from $S(G)$ to $X$ which takes $e$ to $p$. The universal $G$-ambit is unique, by definition. The following model-theoretic (nonstandard analytic) account of the universal $G$-ambit is easily seen to follow from the account of $S(G)$ as the ``Samuel compactification'' of the uniform space $(G,{\mathcal R})$ where $\mathcal R$ is the canonical right uniformity on $G$ (see \cite[Section 2]{Uspenskij}). The basic entourages of $\mathcal R$ are $\{(x,y)\in G^2 : xy^{-1}\in V\}$, where $V\subseteq G$ range over open sets containing the identity $1$ of $G$.

\begin{Proposition} Let $G$ be a topological group, and $M$ a structure as described at the beginning of this section. Let $E$ be the finest bounded, type-definable over $M$ equivalence relation on $G^{*}$ which satisfies the following condition:
\begin{quote}
whenever $g,h\in G^{*}$ and $gh^{-1}\in U^{*}$ for every $U$ which is a neighbourhood of the identity, then $E(g,h)$.
\end{quote}
Then $E$ is $G$-invariant, and  $G^{*}/E$ with the logic topology and distinguished point $1/E$ is the universal $G$-ambit.
\end{Proposition}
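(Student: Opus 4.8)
The plan is to treat $G^\ast/E$ as a quotient of the type space $S_G(M)$ and to use the defining \emph{minimality} of $E$ twice: once to get $G$-invariance, once to get the universal property. Throughout write $\mu=\bigcap_U U^\ast$, the intersection over open neighbourhoods $U$ of $1$; since $G$ is a topological group $\mu$ is a subgroup of $G^\ast$, and the relation $R(g,h)\Leftrightarrow gh^{-1}\in\mu$ is the type-definable over $M$ equivalence relation whose classes are the right cosets $\mu g$. The defining condition on $E$ says exactly $R\subseteq E$, and $E$ is the finest bounded, type-definable over $M$ equivalence relation with this property; existence of such a finest one is by the usual intersection argument (as for $(G^\ast)^{00}_M$ in the Fact above), the trivial relation showing the relevant family is non-empty. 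By Lemma~\ref{lem:log}(i), $G^\ast/E$ with the logic topology is compact and $\pi\colon G^\ast\to G^\ast/E$ is $M$-definable.

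\emph{$G$-invariance.} Fix $a\in G$. As $a\in M$, left translation $\lambda_a\colon g\mapsto ag$ is an $M$-definable bijection of $G^\ast$, so $E^a(g,h):\Leftrightarrow E(ag,ah)$ is again a bounded, type-definable over $M$ equivalence relation (the same number of classes as $E$). Conjugation by the standard element $a$ is a homeomorphism of $G$ fixing $1$ and hence permutes the open neighbourhoods of $1$, giving $a\mu a^{-1}=\mu$; therefore $gh^{-1}\in\mu$ implies $(ag)(ah)^{-1}=a(gh^{-1})a^{-1}\in\mu$, i.e. $R\subseteq E^a$. By minimality of $E$ we get $E\subseteq E^a$, that is $E(g,h)\Rightarrow E(ag,ah)$; applying this to $a^{-1}$ yields the reverse implication. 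Thus $E$ is $G$-invariant, the action $a\cdot(g/E)=ag/E$ is well defined, and each $\lambda_a$ (an $M$-definable bijection respecting $E$) induces a homeomorphism of $G^\ast/E$.

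\emph{$G^\ast/E$ is a $G$-ambit.} The orbit of $1/E$ is $\pi(G)$, and it is dense: every non-empty logic-open set contains a non-empty set definable over $M$, which by $M\preceq M^\ast$ meets the standard part $G$. For continuity of the orbit map $f\colon G\to G^\ast/E$, $a\mapsto a/E$, let $Z$ be logic-closed and suppose $a_0\in G$ lies in the closure (in $G$) of $f^{-1}(Z)=\pi^{-1}(Z)\cap G$. By saturation there is $g\in G^\ast$ with $g\in\mu a_0$ and $g\in\pi^{-1}(Z)$; then $ga_0^{-1}\in\mu$ forces $E(g,a_0)$ by the defining property of $E$, so $a_0/E=g/E\in Z$ and $f^{-1}(Z)$ is closed. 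The same monad-plus-saturation computation, now using $(a^\ast g_0)(a_0 g_0)^{-1}=a^\ast a_0^{-1}\in\mu$, shows each orbit map $a\mapsto a g_0/E$ is continuous, i.e. the action is separately continuous. I expect the \textbf{main obstacle} to be upgrading this to \emph{joint} continuity of $G\times G^\ast/E\to G^\ast/E$: as $G$ need not be locally compact one cannot simply invoke an Ellis-type theorem, and the cleanest route is to observe that the separate continuity above is one manifestation of right-uniform continuity, which is exactly what the Samuel-compactification description of $S(G)$ (see \cite{Uspenskij}) packages; matching $G^\ast/E$ with that description then delivers joint continuity.

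\emph{Universality.} Let $(X,x_0,G)$ be a $G$-ambit with continuous orbit map $o\colon G\to X$, $a\mapsto a x_0$. For $p\in S_G(M)$ put $F(p)=\lim_p o$, the unique point of the compact Hausdorff space $X$ all of whose neighbourhoods $O$ satisfy $o^{-1}(O)\in p$ (note $o^{-1}(O)$ is open in $G$, hence definable in $M$). Then $F\colon S_G(M)\to X$ is continuous and $G$-equivariant, sends $\tp(1/M)$ to $x_0$, and is the unique continuous map with $F(\tp(a/M))=o(a)$ for $a\in G$ (density of the standard types). The induced $M$-definable map $\hat F\colon G^\ast\to X$ gives, by Lemma~\ref{lem:log}(ii), the bounded type-definable over $M$ equivalence relation $E_X(g,h):\Leftrightarrow\hat F(g)=\hat F(h)$. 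The crucial point is $R\subseteq E_X$: since $X$ is compact and the action jointly continuous, $o$ is uniformly right-uniformly continuous, so $gh^{-1}\in\mu$ forces $\hat F(g)=\hat F(h)$ (infinitesimally right-close points have the same standard image). By minimality of $E$ we get $E\subseteq E_X$, so $\hat F$ factors as $\bar F\circ\pi$ with $\bar F\colon G^\ast/E\to X$ a continuous $G$-map sending $1/E$ to $x_0$. Finally any morphism of ambits $G^\ast/E\to X$ must send $a/E=a\cdot(1/E)$ to $a\cdot x_0=o(a)$, hence agrees with $\bar F$ on the dense orbit $\pi(G)$; by continuity and Hausdorffness it equals $\bar F$, giving uniqueness. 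Thus the residual difficulty is concentrated in the joint-continuity step, where the uniform-space (Samuel) picture and the minimality of $E$ encode the very same infinitesimal condition viewed from the two sides.
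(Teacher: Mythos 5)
Your route is genuinely different from the paper's, and most of it is sound. The paper's own proof is a short translation argument: bounded, type-definable over $M$ equivalence relations on $G^{*}$ containing the monad relation $R(g,h)\Leftrightarrow gh^{-1}\in\mu$ correspond to precompact uniformities on $G$ coarser than the right uniformity $\mathcal R$, ``finest'' matches ``finest'', the completion of $G$ under that uniformity is $G^{*}/E$ with the logic topology, and the conclusion (including joint continuity and universality) is then quoted from the theory of the Samuel compactification in \cite{Uspenskij}. You instead verify everything intrinsically, using minimality of $E$ repeatedly: your invariance argument is correct ($a\mu a^{-1}=\mu$ for standard $a$, so $R\subseteq E^{a}$, whence $E\subseteq E^{a}$ and symmetrically); your density and separate-continuity arguments by saturation are correct; and your universality argument (limit along types, $E_{X}=\ker\hat F$ bounded and type-definable by Lemma \ref{lem:log}(ii), $R\subseteq E_{X}$ from right uniform continuity of the orbit map, then $E\subseteq E_{X}$ by minimality) is a legitimate direct proof, though the step ``infinitesimally right-close points have the same standard image'' deserves the explicit transfer: if $\hat F(g)\neq\hat F(h)$, separate by open $O_{1},O_{2}$ with $\delta[O_{1}]\cap O_{2}=\emptyset$, choose $U$ by uniform continuity, and realize the $M$-formula $x\in o^{-1}(O_{1})\wedge y\in o^{-1}(O_{2})\wedge xy^{-1}\in U$ by standard elements, a contradiction.

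The genuine gap is exactly where you flagged it: joint continuity of $G\times G^{*}/E\to G^{*}/E$, which the paper's definition of $G$-flow requires and which does not follow from separate continuity for a general topological group. Your proposed remedy, ``matching $G^{*}/E$ with the Samuel compactification description'', is not a step you may take for free: that matching \emph{is} the paper's entire proof, and once you have it you also get universality immediately, so your direct verifications would become superfluous. But the gap can be closed with the same saturation technique you already use, keeping the proof self-contained. Since translations are homeomorphisms, it suffices to check continuity at points $(1,x_{0})$. Let $O$ be a logic-open neighbourhood of $x_{0}$, $Z$ its complement, $C=\pi^{-1}(x_{0})$, $\tilde Z=\pi^{-1}(Z)$. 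Since $R\subseteq E$ we have $\mu\cdot C=C$, so the partial type ``$x\in\mu$, $y\in C$, $xy\in\tilde Z$'' (over a small parameter set) is inconsistent; by saturation there are an open $U\ni 1$ and a definable $D\supseteq C$ with $(U^{*}\cdot D)\cap\tilde Z=\emptyset$. Put $W=\{y\in G^{*}/E:\pi^{-1}(y)\subseteq D\}$. The preimage of its complement is $\{h:\exists h'\,(E(h,h')\wedge h'\notin D)\}$, a projection of a type-definable set, hence type-definable over a small set; being a union of $E$-classes it is then type-definable over $M$ (automorphisms over $M$ fix every logic-closed set, hence, the quotient being Hausdorff, fix each class of $E$ setwise), so $W$ is logic-open and contains $x_{0}$. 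For standard $u\in U$ and $y\in W$, $G$-invariance gives $\pi^{-1}(u\cdot y)=u\,\pi^{-1}(y)\subseteq U^{*}\cdot D$, disjoint from $\tilde Z$; hence $U\cdot W\subseteq O$. With this inserted, your proof is complete and genuinely independent of the Samuel-compactification machinery.
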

\begin{proof}[Proof (sketch)]
$S(G)$ is the completion of $G$ with respect to the finest precompact uniformity $\mathcal U$ which is coarser than $\mathcal R$. Such a uniformity corresponds to a type-definable equivalence relation $E$ on $G^*$. Since $\mathcal U$ is precompact, $E$ has boundedly many classes. Also $\mathcal U$ is $G$-invariant, as $\mathcal R$ is. The completion of $G$ with respect to $\mathcal U$ is exactly $G^{*}/E$ with the logic topology.
\end{proof}

We give a few additional remarks. The universal $G$-ambit $S(G)$ has a canonical semigroup structure (which in the proof of Theorem 2.1 in \cite{Uspenskij} is seen to follow directly from the universality property of $S(G)$). This semigroup structure is in turn used to prove the uniqueness of the universal minimal $G$-flow:  A $G$-flow $(X,G)$ is \emph{minimal} if $X$ has no proper $G$-subflow  (i.e. closed $G$-invariant subset). A universal minimal $G$-flow is a minimal $G$-flow $(X,G)$ such that for any (minimal) $G$-flow $(Y,G)$ there is some map of $G$-flows $f\colon X\to Y$.
So uniqueness of the universal minimal $G$-flow is not built into the definition, but is nevertheless true. Computation of the universal minimal flow $M(G)$ for various topological groups is an important enterprise. We have been told that for no locally compact noncompact topological group $G$, has $M(G)$ been explicitly described. 
The topological group $G$ is said to be (extremely) amenable if for every $G$-flow $(X,G)$, there is a $G$-invariant Borel probability measure on $X$ ($X$ has a $G$-invariant point).  Clearly it suffices for the universal $G$-ambit to have this property. 

\section{The definable case}
The aim in this main section of the paper is to try to give appropriate analogues of compactification and action on a compact space, for definable groups rather than topological groups. Many of our formulations can possibly be  improved or modified. We also ask a few questions.
We fix a first order structure $M$, not necessarily saturated. By a definable set in $M$ we mean a subset $Y$ of $M^{n}$ definable \emph{with parameters from $M$}.  $M^{*}$ denotes a very saturated elementary extension of $M$. For $Y$ a definable set in $M$, we often use $Y^{*}$ to denote the interpretation in $M^{*}$ of the formula which defines $Y$ in $M$.
\\ 

%{\bf We will begin with the rather elementary case of group compactifications of definable groups. The following definition is crucial both for this situation as well as the subsequent case of group actions. }

We first give an analogue of Definition \ref{def:logic}(ii) in the nonsaturated case:

\begin{Definition} \label{def:nonsat}
Let $Y$ be a definable set in $M$ and $C$ a compact space. By a {\em definable map} $f$ from $Y$ to $C$ we mean a map $f$ such that for any disjoint closed subsets $C_{1}, C_{2}$ of $C$ there is a definable subset $Y'$ of $Y$ such that $f^{-1}(C_{1})\subseteq Y'$ and $Y'\cap f^{-1}(C_{2}) = \emptyset$. 
\end{Definition}

\begin{Lemma} \label{lem:extend} Suppose $Y$ is a definable set in $M$.
\begin{enumerate}
\item[(i)] Suppose $f\colon Y\to C$ is a definable map from $Y$ to a compact space $C$. Then $f$ extends uniquely to an $M$-definable map from $Y^{*}$ to $C$ (in the sense of Definition \ref{def:logic}(ii)).
\item[(ii)] Conversely, suppose $f^{*}$ is an $M$-definable map from $Y^{*}$ to the compact space $C$ (in the sense of Definition \ref{def:logic}(ii)). Then the restriction $f$ of $f^{*}$ to $Y$ is a definable map from $Y$ to $C$ in the sense of Definition \ref{def:nonsat}.
\end{enumerate}
\end{Lemma}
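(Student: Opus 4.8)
The plan is to route everything through the type space $S_Y(M)$, exploiting Lemma \ref{lem:log}(iii): an $M$-definable map out of $Y^*$ is the same data as a continuous map out of the Stone space $S_Y(M)$, precomposed with the tautological map $a\mapsto\tp(a/M)$. So for (i) I will first manufacture, from a definable map $f\colon Y\to C$ in the sense of Definition \ref{def:nonsat}, a continuous map $\hat f\colon S_Y(M)\to C$, and then set $f^*(a)=\hat f(\tp(a/M))$.

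To build $\hat f$, fix $p\in S_Y(M)$ and consider the family $\{\overline{f(Y')} : Y'\in p\}$ of closed subsets of $C$, where $Y'$ ranges over definable sets in $p$. Since $p$ is a filter and $\overline{f(Y'_1\cap Y'_2)}\subseteq\overline{f(Y'_1)}\cap\overline{f(Y'_2)}$, this family has the finite intersection property, so by compactness of $C$ its total intersection is nonempty. I claim it is a single point, and here is where the separation property of Definition \ref{def:nonsat} enters: if $c_1\neq c_2$ both lay in the intersection, pick (using regularity of the compact Hausdorff $C$) disjoint closed $C_1\ni c_1$, $C_2\ni c_2$ with $c_i\in\operatorname{int}(C_i)$, take a definable $Y'$ separating $f^{-1}(C_1)$ from $f^{-1}(C_2)$, and observe that exactly one of $Y'\in p$, $Y\setminus Y'\in p$ holds. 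If $Y'\in p$ then $f(Y')\subseteq C\setminus C_2$ forces $c_2\notin\overline{f(Y')}$, while if $Y\setminus Y'\in p$ then $f^{-1}(C_1)\subseteq Y'$ forces $c_1\notin\overline{f(Y\setminus Y')}$; either way one of $c_1,c_2$ is excluded from a closure in the family, a contradiction. Define $\hat f(p)$ to be the unique point.

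Continuity of $\hat f$ I will verify by showing $\hat f^{-1}(C\setminus D)$ is open for closed $D\subseteq C$: given $p$ with $c=\hat f(p)\notin D$, use normality to sandwich $c$ and $D$ inside disjoint closed sets and apply the separation property to produce a definable $Y'\in p$ with $\overline{f(Y')}\cap D=\emptyset$; then the basic clopen $[Y']$ is a neighbourhood of $p$ mapping into $C\setminus D$. Setting $f^*=\hat f\circ(a\mapsto\tp(a/M))$, the map $f^*$ is $M$-definable because a closed subset of $S_Y(M)$ is an intersection of basic clopens, so its preimage in $Y^*$ is type-definable over $M$. That $f^*$ extends $f$ follows since for $a\in Y$ the point $f(a)$ lies in $\overline{f(Y')}$ for every definable $Y'\ni a$, hence equals $\hat f(\tp(a/M))$. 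For uniqueness I will use that the realized types are dense in $S_Y(M)$ (every nonempty definable set over $M$ has a point in $M$, as $M\prec M^*$): any two $M$-definable extensions factor through continuous maps on $S_Y(M)$ agreeing on this dense set, hence agree everywhere as $C$ is Hausdorff.

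Part (ii) is the easier direction and uses only a standard compactness argument. Given disjoint closed $C_1,C_2\subseteq C$, the sets $(f^*)^{-1}(C_1)$ and $(f^*)^{-1}(C_2)$ are disjoint and type-definable over $M$, so by saturation of $M^*$ there is a set $Y''$ definable over $M$ with $(f^*)^{-1}(C_1)\subseteq Y''$ and $Y''\cap(f^*)^{-1}(C_2)=\emptyset$; restricting the defining formula of $Y''$ to $Y$ in $M$ yields the required definable $Y'\subseteq Y$, using $M\prec M^*$ to transfer membership. The main obstacle is the well-definedness and continuity of $\hat f$ in part (i): the separation property of Definition \ref{def:nonsat} is exactly calibrated to force the intersection of closures to be a single point and to control preimages of closed sets, and getting the topological book-keeping (the choices of separating closed sets via regularity and normality of $C$) right is the crux. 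Once $\hat f$ is in hand, the remaining verifications are routine.
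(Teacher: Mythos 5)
Your proof is correct and follows essentially the same route as the paper's: the key construction --- sending a type $p$ to the unique point of $\bigcap_{Y'\in p}\overline{f(Y')}$, with the singleton claim proved by the same separation argument, and part (ii) by the same saturation/compactness argument --- is identical. The only cosmetic difference is that you package the construction as a continuous map on $S_Y(M)$ and deduce $M$-definability of $f^*$ from continuity plus the fact that closed sets in a Stone space are intersections of clopens, whereas the paper verifies type-definability of preimages directly via the set $\Sigma(y)$; these amount to the same thing in view of Lemma \ref{lem:log}(iii).
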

\begin{proof}
$(i)$ We first define $f^{*}$. Let $c\in Y^{*}$ and let $p(y) = \tp(c/M)$. For $\varphi(y)$ a formula in $p$, let $\overline{f(\varphi(M))}$ denote the closure of $f(\varphi(M))$ in $C$.

\begin{claim}
$\bigcap_{\varphi\in p}\overline{f(\varphi(M))}$ is a singleton in $C$.
\end{claim}
\begin{proof}[Proof of Claim.]  Suppose for a contradiction that  $a\neq b$ are both in $\bigcap_{\varphi\in p}\overline{f(\varphi(M))}$. Let $C_{1}$, $C_{2}$ be disjoint closed neighbourhoods in $C$ of $a,b$ respectively. So there is $\varphi(y)$ over $M$ such that  $f^{-1}(C_{1})\subseteq \varphi(M)$, and $\varphi(M)\cap f^{-1}(C_{2}) = \emptyset$. 
Without loss of generality $\varphi(y)\in p(y)$. Now $f(\varphi(M))$ is disjoint from $C_{2}$ hence (as $C_{2}$ contains an open neighbourhood of $b$), $\overline{f(\varphi(M))}$ does not contain $b$, a contradiction. 
\end{proof}

\vspace{2mm}
\noindent
So define $f^{*}(c)$ to be the unique element in $\bigcap_{\varphi\in p}\overline{f(\varphi(M))}$.
\newline
A similar argument to the claim shows that $f^{*}$ is definable over $M$ (if $D\subseteq C$ is closed, let $\Sigma(y)$ be the set of formulas $\varphi(y)$ over $M$ such that $f^{-1}(D)\subseteq \varphi(M)$, and show that if $c$ satisfies $\Sigma(y)$ in $Y^{*}$ then $f^{*}(c) \in D$).  Uniqueness of $f^{*}$ is also clear.

\vspace{2mm}
\noindent
$(ii)$ Let $C_{1}, C_{2}$ be disjoint closed subsets of $C$. Let $X_{i} = (f^{*})^{-1}(C_{i})$ for $i=1,2$. As $X_{1}$ and $X_{2}$ are disjoint, type-definable over $M$ subsets of $Y^{*}$, by compactness (or saturation of $M^{*}$) they separated by an $M$-definable set $Z$. So $f^{-1}(C_{1})$ and $f^{-1}(C_{2})$ are separated by $Z(M)$, a definable set in $M$.
\end{proof}

We start with the rather easy case of group compactifications.

\begin{Definition} Let $G$ be a group definable in $M$. By a {\em definable compactification} of $G$ we mean a definable homomorphism from $G$ to a compact group $C$ with dense image.
\end{Definition}

\begin{Proposition} Let $G$ be a group definable in $M$. Then there is a (unique) universal definable compactification of $G$, and it is precisely $G^*/(G^*)^{00}_{M}$ (where the homomorphism from $G$ to $G^*/(G^*)^{00}_{M}$ is that induced by the identity embedding of $G$ in $G^{*}$).
\end{Proposition}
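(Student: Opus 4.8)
The plan is to verify that $K := G^{*}/(G^{*})^{00}_{M}$, equipped with the logic topology and with the map $f \colon G \to K$ induced by the identity embedding $G \hookrightarrow G^{*}$, is a definable compactification, and then to check its universal property against an arbitrary definable compactification $h \colon G \to C$.

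First I would record that $K$ is a compact group by the quoted Fact, and that the canonical surjection $\pi \colon G^{*} \to K$ is $M$-definable in the sense of Definition \ref{def:logic}(ii) by Lemma \ref{lem:log}(i). Hence, by Lemma \ref{lem:extend}(ii), its restriction $f = \pi|_{G}$ is a definable map in the sense of Definition \ref{def:nonsat}; it is a homomorphism since $\pi$ is. For density, note that $\overline{f(G)}$ is closed in $K$, so $\pi^{-1}(\overline{f(G)})$ is type-definable over $M$ and contains $G$; writing it as $\bigcap_{i}\psi_{i}(M^{*})$ with $\psi_{i}$ over $M$, each statement $\forall x\,(G(x)\to\psi_{i}(x))$ holds in $M$, hence in $M^{*}$ by elementarity, so the intersection is all of $G^{*}$ and $f(G)$ is dense in $K$. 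Thus $f$ is a definable compactification.

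For universality, let $h \colon G \to C$ be any definable compactification and extend it, via Lemma \ref{lem:extend}(i), to the $M$-definable map $h^{*} \colon G^{*} \to C$. The key step is to show $h^{*}$ is a homomorphism. I would prove this by comparing the two maps $\mu, \nu \colon G^{*}\times G^{*} = (G\times G)^{*} \to C$ given by $\mu(g,g') = h^{*}(gg')$ and $\nu(g,g') = h^{*}(g)\,h^{*}(g')$. Both restrict on the standard points $G\times G$ to $(a,b)\mapsto h(ab)=h(a)h(b)$, so they agree there. The map $\mu$ is $M$-definable as the composite of the definable multiplication $G^{*}\times G^{*}\to G^{*}$ with $h^{*}$. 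For $\nu$, I would use that $h^{*}$ factors through the type space as in Lemma \ref{lem:log}(iii): $\nu$ depends only on $\tp(g,g'/M)$ and is the composite of the tautological map $G^{*}\times G^{*}\to S_{G\times G}(M)$ with a continuous map into $C$ (namely $\tp(g,g'/M)\mapsto h^{*}(g)h^{*}(g')$, using continuity of multiplication on $C$), so $\nu$ is $M$-definable as well. Since $\mu$ and $\nu$ are $M$-definable extensions of the same definable map on $G\times G$, the uniqueness clause of Lemma \ref{lem:extend}(i) forces $\mu=\nu$, i.e. $h^{*}$ is a homomorphism. I expect establishing the $M$-definability of $\nu$ to be the main obstacle, since it is precisely the point where the product structure of $C$ and the logic/type-space topology must be reconciled.

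Finally I would conclude as follows. As $h^{*}$ is a homomorphism, $N := \ker h^{*} = (h^{*})^{-1}(\{1_{C}\})$ is type-definable over $M$ (the preimage of a closed point under an $M$-definable map), normal, and of bounded index, since its index equals the number of classes of the bounded equivalence relation $h^{*}(x)=h^{*}(y)$ furnished by Lemma \ref{lem:log}(ii). By minimality of $(G^{*})^{00}_{M}$ we get $(G^{*})^{00}_{M}\subseteq N$, so $h^{*}$ factors as $h^{*} = \theta\circ\pi$ for a map $\theta \colon K \to C$, which is a homomorphism because $\pi$ is a surjective homomorphism. Moreover $\theta$ is continuous: for closed $D\subseteq C$ we have $\pi^{-1}(\theta^{-1}(D)) = (h^{*})^{-1}(D)$, type-definable over $M$, so $\theta^{-1}(D)$ is closed in the logic topology. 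Clearly $\theta\circ f = h$. Uniqueness of $\theta$ is immediate from density of $f(G)$ in $K$ together with Hausdorffness of $C$, and uniqueness of the universal definable compactification itself then follows formally from the universal property.
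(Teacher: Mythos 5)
Your proposal is correct, and its overall skeleton --- extend the given definable compactification $h\colon G\to C$ to an $M$-definable $h^{*}\colon G^{*}\to C$ via Lemma \ref{lem:extend}(i), show $h^{*}$ is a homomorphism, and then factor through $G^{*}/(G^{*})^{00}_{M}$ using minimality of $(G^{*})^{00}_{M}$ and the logic topology --- is the same as the paper's. Where you genuinely diverge is the key step that $h^{*}$ is multiplicative. The paper argues directly from the explicit formula $f^{*}(a)=\bigcap_{\varphi\in p}\overline{f(\varphi(M))}$ produced in the proof of Lemma \ref{lem:extend}(i), verifying multiplicativity through a chain of identities such as $\bigcap_{\varphi\in p,\psi\in q}\overline{f(\varphi(M)\cdot\psi(M))}=\bigcap_{\varphi\in p}\overline{f(\varphi(M))}\cdot\bigcap_{\psi\in q}\overline{f(\psi(M))}$, whose justification (exchanging directed intersections with products of closures, using compactness of $C$) is left implicit. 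You instead run a soft uniqueness-of-extension argument on the definable set $G\times G$: the maps $\mu(g,g')=h^{*}(gg')$ and $\nu(g,g')=h^{*}(g)h^{*}(g')$ are both $M$-definable on $(G\times G)^{*}=G^{*}\times G^{*}$ ($\mu$ because group multiplication is an $M$-definable function and preimages of type-definable sets under definable functions are type-definable; $\nu$ because $h^{*}$ factors continuously through the type space by Lemma \ref{lem:log}(iii), and the coordinate restriction maps $S_{G\times G}(M)\to S_{G}(M)$ and the multiplication of $C$ are continuous), and they agree on $G\times G$ since $h$ is a homomorphism, so the uniqueness clause of Lemma \ref{lem:extend}(i) forces $\mu=\nu$. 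This trades the paper's hands-on closure computation for a cleaner, functorial argument whose only real work is checking $M$-definability of $\nu$, which you do correctly. You also supply details the paper leaves tacit: that $G^{*}/(G^{*})^{00}_{M}$ is itself a definable compactification (in particular the density of the image of $G$, via the elementarity argument showing any type-definable over $M$ set containing $G$ contains $G^{*}$), that the kernel of $h^{*}$ has bounded index via Lemma \ref{lem:log}(ii), and the uniqueness of both the factoring map $\theta$ and the universal object. All of these verifications are sound.
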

\begin{proof} Let $f\colon G\to C$ be a definable compactification of $G$. By Lemma \ref{lem:extend}$(i)$, $f$ lifts uniquely to an $M$-definable map $f^{*}$ from $G^{*}$  onto $C$.  We claim that $f^{*}$ is a group homomorphism.  The proof of Lemma \ref{lem:extend}$(i)$ shows that for any $a\in G^{*}$, $f^{*}(a) = \bigcap_{\varphi\in p}\overline{f(\varphi(M))}$, where $p = \tp(a/M)$.
Let $b\in G^{*}$ and $q = \tp(b/M)$. Then \[\bigcap_{\varphi\in p, \psi\in q}\overline{f(\varphi(M)\cdot\psi(M))} =
\bigcap_{\varphi\in p, \psi\in q}\overline{f(\varphi(M))}\cdot\overline{f(\psi(M))} = 
\bigcap_{\varphi\in p}\overline{f(\varphi(M))}\cdot\bigcap_{\psi\in q}\overline{f(\psi(M))}\] = $f^{*}(a)\cdot f^{*}(b)$. This is 
enough to show that $f^{*}$ is a homomorphism. So the kernel of $f^{*}$ is a normal type-definable (over $M$) subgroup of $G^{*}$ of bounded index. Also  by \ref{lem:log}, the topology on $C$ coincides with the logic topology (on $G^{*}/N$). So if $(G^*)^{00}_{M}$ is the smallest type-definable over $M$ subgroup of $G^{*}$ of bounded index, then we have a canonical surjection map $\pi\colon G^{*}/(G^{*})^{00}_M \to C = G^{*}/N$, which is clearly a continuous homomorphism and commutes with the canonical homomorphisms from $G$. 
\end{proof}

Now for definable group actions.  Let again $G$ be a group definable in a structure $M$. So we have the action of $G$ on the type space $S_{G}(M)$.
$G$, as a subgroup of $G^{*}$, also acts on the left on $G^{*}$. Suppose $E$ is a bounded, type-definable over $M$, $G$-invariant equivalence relation on $G^{*}$. Then $G$ acts on $G^{*}/E$ (a compact space equipped with the logic topology), and this action factors through the action of $G$ on $S_{G}(M)$.  Moreover if $1$ is the identity element of $G$ then the orbit of $1/E$ in $G^{*}/E$ is dense. So, if we simply stipulate that the compact spaces on which $G$ acts ``definably" are simply such quotients $G^{*}/E$, then by definition  there is a universal such action, namely the action of $G$ on $S_{G}(M)$. The notion of ``definable amenability" of $G$ from the papers \cite{NIPI}, \cite{NIPII} for example, then says precisely that there is a $G$-invariant Borel probability measure on $S_{G}(M)$. And one could likewise define $G$ to be extremely amenable if there is a $G$-invariant point in $S_{G}(M)$, namely an invariant type. \\

Now our original motivation was to give an intrinsic description of the actions described in the last paragraph. For a number of reasons, including that of developing a robust theory in analogy with the topological case, we will only do this under a definability of types assumption: all types in $S_{G}(M)$ are definable. As remarked in the introduction important unstable structures such as the field $\mathbb R$ of real numbers, or the field ${\mathbb Q}_{p}$ of $p$-adic numbers, do have the feature that all types over them are definable.  So our theory will apply to real and $p$-adic  semialgebraic Lie groups. 
%{\bf But let us also remark that if $T$ is $NIP$, $M$ is an arbitrary model of $T$, and $M^{ext}$ is the expansion of $M$ by externally definable sets, then $Th(M^{ext})$ is still $NIP$ (and moreover has quantifier elimination). Moreover all types over $M^{ext}$ are definable. In fact Shelah \cite{dependentdreams}  suggests that in the study of $NIP$ theories we should work with $M^{ext}$. }
Given an $NIP$ theory $T$, an arbitrary model $M$ of $T$ and group $G$ definable in $M$, it would be natural to apply the theory we develop to $G$ as a group definable in the expansion $M^{ext}$. In fact the third author already did this successfully for $fsg$ groups in \cite{Pillay-fsg}\\

\noindent
ASSUMPTION. $M$ is a first order structure, $G$ is a group definable in $M$, and all types in $S_{G}(M)$ are definable.

\vspace{2mm}
\noindent
So note we do not assume that ALL complete types over $M$ are definable, just complete types extending the formula $x\in G$. 

\vspace{2mm}
\noindent
We now give a somewhat strong definition of a {\em definable action} of $G$ on a compact space $X$, suitable for our purposes, under the ASSUMPTION above.

\begin{Definition} \label{def:g-flow} Let $X$ be a compact space.
\begin{enumerate}
\item[(i)] By a definable action of $G$ on $X$ (or definable $G$-flow)  we mean that $G$ acts on $X$ by homeomorphisms, and for each $x\in X$, the map $f_{x}\colon G\to X$ taking $g$ to $g\cdot x$ is definable.
\item[(ii)] A definable $G$-ambit is a definable $G$-flow $(X,G)$ with a distinguished point $x_{0}\in X$ such that the orbit $G\cdot x_{0}$ is dense in $X$.
\end{enumerate}
\end{Definition}

\begin{Remark}
\begin{enumerate}
\item[(i)] The second clause in Definition \ref{def:g-flow}$(i)$ is equivalent to saying that the induced map from $G$ to the compact space $X^{X}$ is definable.

\item[(ii)] Itai Ben Yaacov suggested to the third author that the topologically correct definition in (i) is that in addition to $G$ acting by homeomorphisms, the map from $G$ to the space $\Homeo(X)$ of homeomorphisms of $X$, where $\Homeo(X)$ is equipped with the compact-open topology, is definable when considered as a map from $G$ into the Roelcke compactification of $\Homeo(X)$. Under our definability of types assumption, definable $G$-ambits will  have this property.
\end{enumerate}
\end{Remark} 

\begin{Lemma}
\begin{enumerate}
\item[(i)] The action of $G$ on $S_{G}(M)$ is definable.
\item[(ii)] Moreover for any bounded type-definable over $M$, $G$-invariant equivalence relation $E$ on $G^{*}$, the action of $G$ on $G^{*}/E$ (equipped with the logic topology) is definable.
\end{enumerate}
\end{Lemma}
\begin{proof} $(i)$ Clearly $G$ acts on $S_{G}(M)$ by homeomorphisms. As $S_{G}(M)$ is totally disconnected, the second clause in Definition \ref{def:g-flow}$(i)$, says that for any $L_{M}$-formula $\varphi(x)$, and $p(x)\in S_{G}(M)$, $\{g\in G: \varphi(x)\in gp\}$ is a definable subset of $G$. And this follows 
by definability of $p(x)$.

$(ii)$ This follows from $(i)$: Let $X = G^{*}/E$ and $\pi$ the (continuous) $G$-invariant surjection from $S_{G}(M)$ to $X$. Suppose $a\in X$ and $C_{1}$, $C_{2}$ are disjoint closed subsets of $X$. Let $D_{i} = \pi^{-1}(C_{i})$ for $i=1,2$. So $D_{1}$, $D_{2}$ are disjoint closed subsets of $S_{G}(M)$, and by compactness ($E$ being type-definable over $M$), there are $M$-definable sets $D_{1}'$, $D_{2}'$, containing $D_{1}$, $D_{2}$ respectively, such that $\pi(D_{1}')$ and $\pi(D_{2}')$ are disjoint. Let $p(x)\in \pi^{-1}(a)$. So as in (i) $\{g\in G: gp\in D_{1}'\}$ is a definable subset $Y$ of $G$.  We see that $Y$ separates $\{g\in G: g\cdot a \in C_{1}\}$ and $\{g\in G:g\cdot a \in C_{2}\}$, as required.
\end{proof}

\begin{Proposition} \label{prop:g-ambit} There is a (unique) universal definable $G$-ambit, which is precisely the type space $S_{G}(M)$, under the natural action of $G$, where the distinguished element of $S_{G}(M)$ is the identity element $1$ of $G$. Namely for any other definable $G$ ambit $(X,G,x_{0})$ there is a unique continuous (necessarily surjective) map  $h\colon S_{G}(M) \to X$ of definable $G$ flows with $f(1) = x_{0}$.
\end{Proposition}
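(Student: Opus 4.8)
The plan is to verify that $S_G(M)$ with distinguished point $p_0 = \tp(1_G/M)$ is itself a definable $G$-ambit, and then to construct, for an arbitrary definable $G$-ambit $(X,G,x_0)$, the required map $h$ directly from the topological structure of $S_G(M)$ as a Stone space. That $(S_G(M),G,p_0)$ is a definable $G$-flow is exactly Lemma (the one just above) part (i); and the orbit $G\cdot p_0 = \{\tp(g/M) : g\in G\}$ is dense because any basic clopen set $[\varphi]$ of $S_G(M)$ is nonempty iff $\varphi(G)\neq\emptyset$, so it contains some $\tp(g/M)$. Hence $(S_G(M),G,p_0)$ is a legitimate definable $G$-ambit, which is what makes the universality claim meaningful.

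For the universal property, I would start from the definability of the orbit map $f_{x_0}\colon G\to X$, $g\mapsto g\cdot x_0$, which is guaranteed by Definition \ref{def:g-flow}(i). By Lemma \ref{lem:extend}(i) this extends uniquely to an $M$-definable map $f_{x_0}^*\colon G^*\to X$, and by Lemma \ref{lem:log}(iii) any such $M$-definable map factors through the tautological map $G^*\to S_{G^*}(M)=S_G(M)$. This factorization is precisely the candidate map $h\colon S_G(M)\to X$: concretely, $h(p)$ is the unique point $\bigcap_{\varphi\in p}\overline{f_{x_0}(\varphi(M))}$ supplied by the Claim in the proof of Lemma \ref{lem:extend}(i). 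By construction $h(p_0)=f_{x_0}(1)=x_0$, and $h$ is continuous (indeed, $h^{-1}$ of a closed set is closed, i.e.\ type-definable, since $f_{x_0}^*$ is $M$-definable and $h$ inherits this through the quotient topology of Lemma \ref{lem:log}(iii)). Surjectivity follows because $h$ is continuous with compact domain, so $h(S_G(M))$ is closed, and it contains the dense orbit $G\cdot x_0$; hence $h$ is onto.

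The main point requiring care is showing $h$ is a \emph{map of $G$-flows}, i.e.\ $G$-equivariant: $h(g\cdot p)=g\cdot h(p)$ for all $g\in G$ and $p\in S_G(M)$. I would prove this first on the dense orbit, where $g\cdot p_0 = \tp(g/M)$ and $h(\tp(g/M)) = f_{x_0}(g) = g\cdot x_0 = g\cdot h(p_0)$, and then extend by continuity: for fixed $g\in G$, both $p\mapsto h(g\cdot p)$ and $p\mapsto g\cdot h(p)$ are continuous maps $S_G(M)\to X$ (the latter because $g$ acts as a homeomorphism of $X$ and $p\mapsto g\cdot p$ is a homeomorphism of $S_G(M)$), and they agree on the dense set $G\cdot p_0$, hence everywhere. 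This is the step I expect to be the genuine obstacle, since it is where the ambit structure and the definable-action hypothesis interact; the rest is bookkeeping built on Lemmas \ref{lem:extend} and \ref{lem:log}.

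Finally, uniqueness of $h$ is immediate: any map of $G$-flows sending $p_0$ to $x_0$ must send $g\cdot p_0$ to $g\cdot x_0$ for every $g\in G$ by equivariance, so it is determined on the dense orbit $G\cdot p_0$, and therefore on all of $S_G(M)$ by continuity into the Hausdorff space $X$.
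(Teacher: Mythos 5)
Your proposal is correct, and its skeleton (orbit map $f_{x_0}$, extension via Lemma \ref{lem:extend}(i), factoring through the type space, surjectivity from compactness plus the dense orbit) coincides with the paper's proof. Where you genuinely diverge is in the verification of $G$-equivariance, which is indeed the one nontrivial step: the paper proves it by a direct computation with the explicit formula, namely
\[
g\cdot h(p) \;=\; g\,\bigcap_{\varphi\in p}\overline{f(\varphi(M))} \;=\; \bigcap_{\varphi\in p}\overline{f(g\varphi(M))} \;=\; \bigcap_{\psi\in gp}\overline{f(\psi(M))} \;=\; h(gp),
\]
using that $g$ acts by homeomorphisms (so $g\overline{A}=\overline{gA}$) and that $f(\varphi(M))=\varphi(M)\cdot x_{0}$; whereas you check equivariance only on the dense orbit of realized types, where it is the trivial identity $(gh')\cdot x_{0}=g\cdot(h'\cdot x_{0})$, and then invoke the standard fact that two continuous maps into a Hausdorff space agreeing on a dense set agree everywhere. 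Your route is more elementary and robust: it needs the intersection formula only to identify $h$ on realized types, and the same density argument simultaneously delivers the uniqueness of $h$, which the paper asserts in the statement but never actually argues in its proof. What the paper's computation buys in exchange is an explicit description of how $h$ transforms under the action, which is reused in spirit elsewhere (e.g.\ in the semigroup discussion); but as a self-contained proof of this proposition, your version is complete and, if anything, tidier.
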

\begin{proof} Let $(X,G,x_{0})$ be a definable $G$-ambit. Let $f\colon G\to X$ be $f(g) = g\cdot x_{0}$. By Lemma \ref{lem:extend}$(i)$, $f$ extends uniquely to an $M$-definable map $f^{*}$ from $G^{*}$ to $X$, and thus  a continuous map $h\colon S_{G}(M) \to X$. Note that $h$ is surjective as its image is compact and contains $G\cdot x_{0}$. Also note that $h(1) = x_{0}$. It remains to see that $h$ is a map of $G$-spaces. This follows from the construction of $f^{*}$ in the proof of \ref{lem:extend}: for $p\in S_{G}(M)$, and $g\in G$, $gh(p) = g\bigcap_{\varphi\in p}\overline{f(\varphi(M))}$ = 
$\bigcap_{\varphi\in p}g\overline{f(\varphi(M))} = \bigcap_{\varphi\in p}\overline{f(g\varphi(M))}$ = 
$\bigcap_{\psi\in gp}\overline{f(\psi(M))} = h(gp)$ (using the fact that $f(\varphi(M)) = \varphi(M)\cdot x_{0}$).
\end{proof}

We now discuss the semigroup structure on $S_{G}(M)$ 

\begin{Definition}  Let $p, q\in S_{G}(M)$. Then by $p*q$ we mean $\tp(ab/M)$ where $b$ realizes $q$ and $a$ realizes the unique coheir of $p$ over $M,b$.  (Equivalently $a$ realizes $p$ and $b$ realizes the unique heir of $q$ over $M,a$). 
\end{Definition}

So note that if $p = g$ is in $G$ (namely a realized type), then $p*q$ is just $gq$, so the  operation $*\colon S_{G}(M)\times S_{G}(M) \to S_{G}(M)$, extends the group action $G\times X \to X$.  

\begin{Lemma}
\begin{enumerate}
\item[(i)] $*$ is a semigroup operation on $S_{G}(M)$, continuous on the left, namely for given $q\in S_{G}(M)$ the map taking $p$ to $p*q$ is a continuous map from $S_{G}(M)$ to itself. 

\item[(ii)] Given $q\in S_{G}(M)$, let $r_{q}$ be the unique continuous function from $S_{G}(M)$ to itself which takes $1$ to $q$ (given by Proposition \ref{prop:g-ambit}).  Then for any $p\in S_{G}(M)$, $r_{q}(p)$ is precisely $p*q$.
\end{enumerate}
\end{Lemma}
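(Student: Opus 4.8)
The plan is to get well-definedness and associativity from the finite-satisfiability (coheir) calculus, to read off left continuity from the definability of $q$, and then to identify $p\mapsto p*q$ with $r_q$ via the universal property of Proposition \ref{prop:g-ambit}. First I would record well-definedness, which is needed throughout: given $a\models p$, the ASSUMPTION that every type in $S_G(M)$ is definable makes $q$ definable, so by the recalled Fact that a definable type has a unique heir over any extension, $q$ has a unique heir over $Ma$; realizing it by $b$ and using the recalled duality between heirs and coheirs, one checks that $\tp(ab/M)$ is computed from $p$ and the defining scheme of $q$ and so depends only on $p$ and $q$. This both justifies the phrase ``the unique coheir'' and shows $p*q\in S_G(M)$, hence is again definable.

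For associativity I would realize a single coheir chain: pick $c\models s$, then $b$ realizing the coheir of $q$ over $Mc$, then $a$ realizing the coheir of $p$ over $Mbc$, so that $\tp(b/Mc)$ and $\tp(a/Mbc)$ are finitely satisfiable in $M$. By definition $\tp(bc/M)=q*s$, and restricting the coheir of $p$ to $Mb$ gives $\tp(ab/M)=p*q$. Restricting $\tp(a/Mbc)$ to $M\cup\{bc\}$ yields $p*(q*s)=\tp(a\cdot bc/M)=\tp(abc/M)$. The one genuinely new point is that $\tp(ab/Mc)$ is again finitely satisfiable in $M$: given $\rho(ab,c)$ with $\rho$ over $Mc$, apply finite satisfiability of $\tp(a/Mbc)$ to $\rho(x\cdot b,c)$ to find $m\in M$ with $\rho(m\cdot b,c)$, then finite satisfiability of $\tp(b/Mc)$ to $\rho(m\cdot y,c)$ to find $m'\in M$ with $\rho(m\cdot m',c)$, where $m\cdot m'\in M$. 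Hence $ab$ realizes the coheir of $p*q$ over $Mc$ and $(p*q)*s=\tp(ab\cdot c/M)=\tp(abc/M)$, so both groupings agree. \textbf{This transitivity of finite satisfiability, together with the bookkeeping of which factor is definable so as to legitimately invoke well-definedness at each grouping, is the main obstacle.}

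Left continuity is then a direct computation from the definability of $q$. For a basic clopen $[\chi]\subseteq S_G(M)$, writing $\psi(z,y):=\chi(z\cdot y)$ and letting $(d_q\psi)(z)$ be the $\psi$-definition of $q$, I would show that $\chi\in p*q$ holds iff $(d_q\psi)\in p$: taking $a\models p$ and $b$ realizing the unique (definable) heir of $q$ over $Ma$, the formula $\chi(a\cdot b)$ holds iff $(d_q\psi)(a)$ holds, and the latter depends only on $\tp(a/M)=p$. Thus the preimage of $[\chi]$ under $p\mapsto p*q$ is the clopen set $[(d_q\psi)]$, so the map is continuous, giving the left continuity in (i).

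Finally, for (ii) I would invoke the universal property. Let $\sigma_q\colon p\mapsto p*q$. It is continuous by the previous paragraph and $G$-equivariant: for $g\in G$, if $a$ realizes the coheir of $p$ over $Mb$ then $ga$ realizes the coheir of $gp$ over $Mb$, since finite satisfiability is preserved under left translation by an element of $M$; hence $(gp)*q=\tp(gab/M)=g(p*q)$. Moreover $1*q=q$ because $1$ is a realized type. Thus $\sigma_q$ is a map of definable $G$-flows sending $1$ to $q$. Since any continuous $G$-flow map is determined on the dense orbit $G\cdot 1=G$, on which both $\sigma_q$ and $r_q$ send $g$ to $gq$, the uniqueness in Proposition \ref{prop:g-ambit} forces $r_q=\sigma_q$, i.e. $r_q(p)=p*q$ for all $p$, which is (ii).
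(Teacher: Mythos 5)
Your proof is correct. Since the paper's own proof of this lemma reads ``Left to the reader,'' there is no argument to compare against; what you have written --- well-definedness from uniqueness of heirs of definable types, associativity via a single coheir chain whose key step is the transitivity of finite satisfiability across the group product (showing $\tp(ab/Mc)$ is again finitely satisfiable in $M$), left continuity by pulling clopen sets back along the definition scheme of $q$, and identification of $r_q$ with $p\mapsto p*q$ by $G$-equivariance together with density of the realized types --- is exactly the standard argument the authors intend the reader to supply, and it fills the gap completely.
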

\begin{proof} Left to the reader.
\end{proof}

\begin{Definition}
\begin{enumerate}
\item[(i)] By a {\em minimal definable $G$-flow} we mean a definable $G$-flow $(X,G)$ with no proper closed $G$-invariant subset  (i.e. with no proper $G$-subflow).
\item[(ii)] By a {\em universal minimal definable $G$-flow}, we mean a minimal definable $G$-flow $(I,G)$ such that for any minimal definable $G$-flow $(X,G)$ there is a $G$-map from $I$ to $X$.  (So in fact for every definable $G$-flow $X$, minimal or not, there is a $G$-map from $I$ to $X$).
\end{enumerate}
\end{Definition}

Note first that any minimal $G$-subflow $I$ of $S_{G}(M)$ will be a universal minimal definable $G$-flow. (Because for
any definable $G$-flow $(X,G)$ there is by Proposition \ref{prop:g-ambit} a $G$-map from $S_{G}(M)$ to $X$ whose restriction to $I$ is a $G$ map from $I$ to $X$. )  On the other hand, there is no a priori reason for there to be a unique universal minimal definable $G$-flow  (in the obvious sense). But in fact uniqueness of the the universal minimal definable $G$-flow  \emph{is} true.  This makes use of the semigroup structure on $S_{G}(M)$, and the proof is simply a copy of any of the proofs in the classical situation. We go briefly through the steps, following Section 3 of \cite{Uspenskij}.
Let $I$ be a minimal subflow of $S_{G}(M)$:

\begin{claim}
\begin{enumerate}
\item[(i)] Left ideals (with respect to $*$) of $S_{G}(M)$ coincide with closed $G$-subflows.
\item[(ii)] $I$ contains an idempotent $p_{0}$ and  $q*p_{0} = q$ for all $q\in I$.
%\item[(iii)] $S_{G}(M)*p$ being a left ideal coincides with $I$.
\item[(iii)] Every $G$-map $f:I\to I$ has the form $p \to p*q$ for some $q\in I$.
\end{enumerate}
\end{claim}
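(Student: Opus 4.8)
The plan is to verify the three parts of the Claim in sequence, exploiting that $S_G(M)$ is a compact left-topological semigroup under $*$ together with the semigroup-theoretic facts recalled in Lemma 3.16 (continuity on the left, and $r_q(p) = p*q$). The overarching strategy is the standard Ellis semigroup argument transported into the definable setting; the main subtlety is to use only left-continuity, never right-continuity, since $*$ need not be continuous in its first argument.

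For part (i), I would argue both inclusions. If $L$ is a left ideal, then for any $g \in G$ (viewed as a realized type) and any $p \in L$ we have $g*p = gp \in L$, so $L$ is $G$-invariant; and to see $L$ is closed, note that by Lemma 3.16(ii) the map $p \mapsto p*q = r_q(p)$ is continuous, hence so is any composition argument showing $L$ is closed — more directly, a minimal left ideal is closed because it equals $S_G(M)*q$ for a suitable $q$, which is the continuous image of a compact set, hence compact, hence closed. Conversely, if $F$ is a closed $G$-invariant subset, then $G\cdot p$ is dense in $F$ for the right $p$, and since $g*p = gp$, closure under the $G$-action together with continuity of $p' \mapsto p'*q$ should promote $G$-invariance to $*$-invariance on the left, giving a left ideal. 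The point to be careful about is that $G$-invariance only gives invariance under realized types, so I would pass to closures using left-continuity to conclude $S_G(M)*p \subseteq F$.

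For part (ii), since $I$ is a minimal left ideal, it is a compact right-topological (here left-continuous) semigroup, so by the Ellis–Namakura lemma it contains an idempotent $p_0 = p_0 * p_0$. Then for $q \in I$, the set $q*I$ is a left ideal contained in $I$, hence equals $I$ by minimality, so $q \in q*I$, and a standard manipulation with the idempotent (considering the left ideal of elements fixed by right-multiplication by $p_0$, or directly that $\{q \in I : q*p_0 = q\}$ is a nonempty closed left ideal) forces $q*p_0 = q$ for all $q \in I$. I would set up $\{q : q*p_0 = q\}$ and check it is closed (using left-continuity of $p \mapsto p*p_0$) and a left ideal, hence all of $I$.

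For part (iii), given a $G$-map $f\colon I \to I$, I would set $q = f(p_0)$ and show $f(p) = p*q$ for every $p \in I$. Using that $f$ commutes with the $G$-action and is continuous, together with $r_q(p)=p*q$ and the density of $G\cdot p_0$-type orbits, one checks the two continuous maps $f$ and $r_q$ agree on a dense set and hence everywhere; more cleanly, for realized $g$, $f(g*p_0) = g*f(p_0) = g*q = (g*p_0)*q$ by associativity and $p_0*q$-type normalization, so $f$ and $p \mapsto p*q$ agree on $G*p_0$, which is dense in $I$, and both are continuous, giving equality. The step I expect to be the main obstacle is precisely this last density-plus-continuity argument: one must confirm that the relevant orbit is dense in $I$ and that both maps in play are left-continuous (here $r_q$ is continuous by Lemma 3.16, and $f$ is continuous by hypothesis as a map of flows), so that agreement on the dense set propagates. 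Throughout, the key obstacle to guard against is accidentally invoking right-continuity of $*$, which is not available; all limiting arguments must be phrased through the left-continuous maps $r_q$ and through realized types $g \in G$ where $g*p = gp$ holds on the nose.
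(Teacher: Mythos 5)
Your parts (i) and (ii) are essentially sound and follow the standard Ellis--semigroup route that the paper itself relies on without writing out (the paper only proves (iii) explicitly). One slip in (ii): $q*I$ is \emph{not} a left ideal; a left ideal must absorb multiplication on the left, and $s*(q*i)=(s*q)*i$ lands in $I$ but not visibly in $q*I$. The sets that do the job are $S_{G}(M)*p_{0}$ (which equals $I$ by minimality, so every $q\in I$ has the form $s*p_{0}$, whence $q*p_{0}=s*p_{0}*p_{0}=q$), or your fallback $J=\{q\in I: q*p_{0}=q\}$, which is indeed a nonempty closed left ideal containing $p_{0}$; since you explicitly commit to the $J$ argument, (ii) stands. (In (i), note that only principal left ideals $S_{G}(M)*q=r_{q}(S_{G}(M))$ are automatically closed, so the claim should be read as identifying \emph{closed} left ideals with closed $G$-subflows; your two inclusions, in particular $S_{G}(M)*p\subseteq \overline{G\cdot p}\subseteq F$ via left-continuity, are the right ones.)

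The genuine gap is in (iii), at the step you label ``associativity and $p_{0}*q$-type normalization.'' Your computation $f(g\cdot p_{0})=g*q=(g*p_{0})*q$ requires $p_{0}*q=q$, where $q=f(p_{0})$. But (ii) gives $q*p_{0}=q$: the idempotent is a \emph{right} identity on $I$. Left multiplication by $p_{0}$ is not the identity on $I$ in general: $I$ decomposes as the disjoint union of the groups $e*I$ over its idempotents $e$, and $p_{0}*q=q$ holds exactly when $q\in p_{0}*I$. So you have not shown that $f$ and $r_{q}$ agree on the dense orbit $G\cdot p_{0}$; indeed, agreement there is \emph{equivalent} to the unproved identity $p_{0}*q=q$ (take $g=1$). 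The identity is true for this particular $q$, but it needs an argument using the idempotency of $p_{0}$ together with continuity and equivariance of $f$: take a net $(g_{i})$ in $G$ with $g_{i}\to p_{0}$ in $S_{G}(M)$; then $g_{i}p_{0}=r_{p_{0}}(g_{i})\to p_{0}*p_{0}=p_{0}$, so $g_{i}q=f(g_{i}p_{0})\to f(p_{0})=q$ by continuity and equivariance of $f$, while $g_{i}q=r_{q}(g_{i})\to p_{0}*q$ by continuity of $r_{q}$; Hausdorffness gives $p_{0}*q=q$. (The same net computation proves (iii) outright: for $p\in I$ and $g_{i}\to p$, one has $g_{i}p_{0}\to p*p_{0}=p$ by (ii), used on the correct side, so $f(p)=\lim g_{i}q=p*q$.) The paper sidesteps the issue by a different device: it forms $h=f\circ r_{p_{0}}\colon S_{G}(M)\to I$, a continuous $G$-map defined on the \emph{whole} ambit, which by the universal property (Proposition 3.11 and Lemma 3.16(ii)) must equal $r_{t}$ with $t=h(1)=f(p_{0})$; since $r_{p_{0}}$ restricts to the identity on $I$ by (ii), $f=r_{t}$ on $I$. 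Your dense-orbit strategy is salvageable, but only after supplying the extra limit argument above.
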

\begin{proof}
Consider $h = f\circ r_{p_{0}}: S_{G}(M)\to I$. Then $h = r_{t}$ where $h(1) = t\in I$. As $r_{p_{0}}|I$ is the identity (by (ii)), $f$ and $h = r_{t}$ have the same restriction to $I$.
\end{proof}

\begin{claim}
(iv) Every $G$-map $f\colon I \to I$ is a bijection. 
\end{claim}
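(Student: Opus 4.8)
The plan is to exploit the idempotent $p_0 \in I$ from Claim (ii) together with the description of $G$-maps provided by Claim (iii). By Claim (iii), any $G$-map $f\colon I \to I$ has the form $p \mapsto p * q$ for some fixed $q \in I$. To show $f$ is a bijection, the natural strategy is to produce a two-sided inverse of the same form, i.e.\ to find $q' \in I$ such that composing the two maps $p \mapsto p*q$ and $p \mapsto p*q'$ yields the identity on $I$ in both orders. Since composition of these right-multiplication maps corresponds to the semigroup operation, this reduces to a purely algebraic statement about the multiplicative structure of $I$.

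First I would observe that, because $I$ is a minimal left ideal (by Claim (i), minimal subflows are exactly minimal left ideals), the element $q \in I$ generates all of $I$ as a left ideal: $S_G(M) * q = I$, and in fact $I * q = I$ by minimality, since $I*q$ is a nonempty closed left ideal contained in $I$. Hence there exists $q' \in I$ with $q' * q = p_0$, the idempotent. Now using that $p_0$ is a left identity on $I$ (Claim (ii) gives $q * p_0 = q$, but one also checks $p_0 * r = r$ for $r \in I$ in the standard Ellis-semigroup setup, or one works with the appropriate one-sided identity), I would compute that right multiplication by $q'$ inverts right multiplication by $q$ on $I$. Concretely, for $p \in I$ the map $p \mapsto (p*q)*q' = p*(q*q')$ should return $p$; the key is that $q*q'$ acts as the identity on $I$, which follows from associativity of $*$ together with $q'*q = p_0$ and the idempotent/identity properties of $p_0$.

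The main obstacle I anticipate is bookkeeping the one-sided nature of everything: the operation $*$ is only continuous on the left (Lemma following the definition of $*$), and the identity/idempotent relations in Claim (ii) are stated on one side ($q*p_0 = q$), so I must be careful not to silently assume two-sided cancellation or two-sided identities. The correct route is the classical Ellis argument: minimality of $I$ forces $p \mapsto p*q$ to be surjective onto $I$ (its image is a closed left ideal, hence all of $I$), and then injectivity follows because if $p_1 * q = p_2 * q$ one multiplies on the right by a suitable $q'$ recovering $p_0$ and uses that $p_0$ acts as a left identity on the relevant elements. Since the excerpt explicitly says the argument is ``a copy of any of the proofs in the classical situation,'' following Section 3 of \cite{Uspenskij}, I would transcribe the standard proof that in a compact right-topological (here left-topological) semigroup every minimal left ideal is a group, whence every $G$-map, being right multiplication within this group, is a bijection.
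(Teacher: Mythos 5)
There is a genuine gap, and it sits exactly where you anticipated trouble: the one-sided nature of the identities. Your argument needs $p*(q*q') = p$ for all $p \in I$ (this is the composition $g\circ f$, where $f(p)=p*q$ and $g(p)=p*q'$), and you justify it by asserting that ``one also checks $p_0*r = r$ for $r\in I$ in the standard Ellis-semigroup setup'' and, at the end, that a minimal left ideal is a group. Both assertions are false in general. Claim (ii) makes $p_0$ a \emph{right} identity on $I$ only; in a compact semigroup with $*$ continuous on the left, a minimal left ideal is a disjoint union of groups, one for each of its (typically many) idempotents, and $p_0*r = r$ holds only for $r$ in the group $p_0*I$, not on all of $I$. (For instance, minimal left ideals of $\beta\mathbb{Z}$ contain enormously many idempotents, so they are nowhere near being groups.) Consequently $q'*q = p_0$ does not let you conclude anything about $q*q'$ by ``inverting in the group.'' Note also that you computed the wrong composition: with $q'*q = p_0$ in hand, the composition that is \emph{immediately} the identity is $p \mapsto (p*q')*q = p*(q'*q) = p*p_0 = p$, i.e.\ $f\circ g = \mathrm{id}$; the direction you worked on, $p*(q*q')$, is precisely the one that does not follow from Claim (ii).

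Your route can be repaired, but by a different argument than the one you give: one checks that $q*q'$ is idempotent, since $(q*q')*(q*q') = q*(q'*q)*q' = q*p_0*q' = q*q'$, and then reruns the minimality argument behind Claim (ii) (for any idempotent $v\in I$, the set $S_G(M)*v$ is a closed left ideal inside $I$, hence equals $I$, whence $p*v=p$ for all $p\in I$) to conclude that $q*q'$ is a right identity on $I$. The paper avoids this extra work entirely with a formal trick: it only establishes the easy direction $f\circ g = \mathrm{id}$, observes that since $f$ was an \emph{arbitrary} $G$-map this shows every $G$-map $I\to I$ is surjective, applies this to $g$ itself, and concludes that $g$ is bijective and $f = g^{-1}$. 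So the paper never needs $q*q'$ to be an identity, never needs $I$ to be a group, and never uses any left-identity property --- which is exactly the two-sided assumption you warned yourself against and then silently used.
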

\begin{proof} By (iii) $f = r_{t}|I$ for some $t\in I$. Now $I*t$ being an ideal contained in $I$ coincides with $I$ (by minimality of $I$) hence $s*t = p_{0}$ for some $s\in I$. Let $g: I \to I$ be the map $r_{s}|I$.  Then $f\circ g = id$. As $f$ was arbitrary all $G$-maps from $I$ to $I$ are surjective. In particular $g$ is hence $f$ is a bijection. 
\end{proof}

\vspace{2mm}
\noindent
We conclude from (iv):
\begin{Proposition}  There is a unique (up to isomorphism of $G$-spaces) universal minimal definable $G$-flow, and it coincides with some (any) minimal $G$-subflow of $S_{G}(M)$. 
\end{Proposition}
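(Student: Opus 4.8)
The plan is to establish existence and uniqueness separately, leveraging the claims (i)--(iv) that precede the statement. For \emph{existence}, I would first observe that $S_{G}(M)$, being a nonempty compact space on which $G$ acts, contains at least one minimal $G$-subflow: the collection of nonempty closed $G$-invariant subsets of $S_{G}(M)$, ordered by reverse inclusion, satisfies the hypotheses of Zorn's lemma, since the intersection of a descending chain of nonempty closed $G$-invariant sets is again nonempty (by compactness) and closed and $G$-invariant. Let $I$ be such a minimal subflow. By the remark immediately preceding Claim (i)--(iii) (which uses Proposition \ref{prop:g-ambit}), any minimal $G$-subflow $I$ of $S_{G}(M)$ is already a universal minimal definable $G$-flow, so existence is settled.

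For \emph{uniqueness}, suppose $(I,G)$ and $(J,G)$ are two universal minimal definable $G$-flows; I want to show they are isomorphic as $G$-spaces. Since both are minimal $G$-subflows of $S_{G}(M)$ (any universal minimal flow must in particular be minimal, and we may realize it inside $S_{G}(M)$ via the universal property mapping $S_{G}(M)$ onto it, whose restriction to a minimal subflow gives a minimal subflow image), I can reduce to comparing two minimal subflows $I,J \subseteq S_{G}(M)$. By universality of $I$ there is a $G$-map $f\colon I\to J$, and by universality of $J$ there is a $G$-map $g\colon J\to I$. Then $g\circ f\colon I\to I$ and $f\circ g\colon J\to J$ are $G$-maps. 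The crux is Claim (iv): every $G$-map from a minimal subflow to itself is a bijection. Hence $g\circ f$ and $f\circ g$ are bijections, which forces $f$ and $g$ to be bijective, and since they are continuous maps between compact Hausdorff spaces they are homeomorphisms. Thus $f$ is an isomorphism of $G$-spaces, giving uniqueness up to isomorphism.

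The main obstacle, and the real content, is Claim (iv), but that has already been proved in the excerpt using the semigroup structure (via the fixed idempotent $p_0$ and the representation of $G$-maps as right multiplications $r_t|I$). So the work that remains for me is essentially bookkeeping: verifying that any abstract universal minimal definable $G$-flow can be identified with a minimal subflow of $S_{G}(M)$, and assembling the mutually inverse $G$-maps into an isomorphism. The identification step deserves a line of care: given an abstract universal minimal $(X,G)$, the universal property of $S_{G}(M)$ as an ambit (Proposition \ref{prop:g-ambit}) yields a surjective $G$-map $S_{G}(M)\to X$; restricting to any minimal subflow $I$ of $S_{G}(M)$ produces a $G$-map $I\to X$ whose image is a nonempty closed $G$-invariant subset of $X$, hence all of $X$ by minimality of $X$, and the reverse universality of $X$ together with Claim (iv) shows this surjection is in fact a bijection. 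So every universal minimal definable $G$-flow is isomorphic to a minimal subflow of $S_{G}(M)$, and all such subflows are mutually isomorphic by the argument above.
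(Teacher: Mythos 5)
Your proposal is correct and follows essentially the same route as the paper: the paper likewise fixes a minimal subflow $I$ of $S_{G}(M)$, takes $J$ any other universal minimal flow, obtains surjective $G$-maps $f\colon I\to J$ and $g\colon J\to I$ from the two universality properties, and applies Claim (iv) to $g\circ f$ to conclude that $f$ is injective, hence a homeomorphism of compact Hausdorff spaces. Your extra layer of first realizing both flows inside $S_{G}(M)$ is harmless but redundant, since the ``identification step'' in your final paragraph is already the paper's entire uniqueness argument.
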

\begin{proof} Let $I$ be a given minimal subflow of $S_{G}(M)$, and $J$ any other universal minimal $G$-flow. There are $G$-maps $f:I\to J$ and $g:J\to I$, both necessarily surjective. The composition $g\circ f:I\to I$ is bijective by (iv) above, whereby $f$ is injective, hence a homeomorphism. 
\end{proof}

So part of the content of  \cite{GPP} was to describe the universal minimal definable $G$-flow when $G = SL(2,\R)$ and $M = (\R,+,\cdot)$.

\vspace{2mm}
\noindent
Finally we discuss amenability. Our definability of types assumption remains in place.

\begin{Definition}
\begin{enumerate}
\item[(i)] $G$ is {\em definably amenable} if for every definable $G$-flow $(X,G)$ there is a $G$-invariant Borel probability measure on $X$.
\item[(ii)] $G$ is {\em definably extremely amenable} if for every definable $G$-flow $(X,G)$, $X$ has a fixed point. 
\end{enumerate}
\end{Definition}

\begin{Remark}  By Proposition \ref{prop:g-ambit}, definable (extreme) amenability of $G$ is equivalent to $S_{G}(M)$ having a $G$-invariant Borel probability measure (fixed point)
\end{Remark} 

\begin{Question} Suppose $G$ is definably amenable. Is there then a \emph{definable} $G$-invariant Borel probability measure on $S_{G}(M)$? Where definability of $\mu$ means: for any $L$-formula $\varphi(x,y)$, and closed disjoint subsets $C_{1}, C_{2}$ of $[0,1]$ $\{b\in M: \mu(\varphi(x,b))\in C_{1}\}$, and $\{b\in M:\mu(\varphi(x,b))\in C_{2}\}$ are separated by a definable set in $M$. 
\end{Question}

\begin{Question} \label{ques}  Assume $T$ has $NIP$, $M\models T$ and $G$ is a group definable in $M$ (no definability of types assumption).  
\begin{enumerate}
\item[(i)] Suppose there is a $G$-invariant Borel probability measure on $S_{G}(M)$. Is there also one on $S_{G}(M^{ext})$?
\item[(ii)] Likewise, if there is a fixed point in $S_{G}(M)$ is there also one in $S_{G}(M^{ext})$?
\end{enumerate}
\end{Question}
 
Question \ref{ques}$(i)$ has a positive answer when $T$ is $o$-minimal, essentially by the characterization in \cite{CP} as well as \cite{Pillay-fsg}. We are not sure about (ii), even in the $o$-minimal case.

%{\bf Following the classical (topological) case, we say that the definable group $G$ is (definably) amenable if for every definable $G$-flow $(X,G)$ there is a $G$-invariant Borel probability measure on $X$. By Proposition ?? this is equivalent to there being a $G$-invariant Borel probability measure on $S_{G}(M)$, which is equivalent to definable amenability of $G$, as defined in \cite{NIPI}. Moreover by the afore-mentioned paper, definable amenability of $G$ does not depend on the model of $Th(M)$. Likewise $G$ is (definably) extremely amenable if and only if there is a $G$-invariant type $p\in S_{G}(M)$.}

%\begin{Proposition} Suppose that all types over $M$ are definable. Then
%\begin{enumerate}
%\item[(i)] We have a canonical semigroup structure on $S_{G}(M)$, continuous in the first coordinate...

%\item[(ii)] There is a universal minimal definable $G$-flow (up to isomorphism in the natural sense): any minimal $G$-invariant closed subset of $S_{G}(M)$. 
%\end{enumerate}
%\end{Proposition}

\vspace{2mm}
\noindent
Finally we return to our definability of types assumption on $S_{G}(M)$, and point out that Pestov's characterization (\cite{Pestov}, Theorem 8.1) of extreme amenability passes suitably to the definable category. We say that a definable subset $Y$ of $G$ is \emph{left generic} if finitely many left translates of $Y$ cover $G$.

\begin{Proposition}  $G$ is (definably) extremely amenable if and only if for each definable left generic subset $Y$ of $G$, 
$YY^{-1} = G$
\end{Proposition}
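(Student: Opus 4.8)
The plan is to prove both implications by relating left generic definable sets to the minimal subflows of $S_G(M)$, following the strategy of Pestov's argument and using the semigroup machinery already set up above.

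For the easy direction ($\Rightarrow$), suppose $G$ is definably extremely amenable, so by the Remark there is a $G$-invariant type $p\in S_G(M)$, i.e.\ $gp=p$ for all $g\in G$. Given a definable left generic $Y$, fix $g_1,\dots,g_n\in G$ with $\bigcup_i g_iY=G$. Writing $[Z]=\{q\in S_G(M):Z\in q\}$ for the clopen set of types containing $Z$, the whole group lies in $p$, so since $p$ is an ultrafilter on the definable subsets of $G$ some $g_iY\in p$; invariance gives $Y\in p$, and invariance again gives $gY\in p$ for every $g\in G$. Hence $Y\cap gY\in p$, so it is nonempty, i.e.\ $g\in YY^{-1}$; as $g$ was arbitrary, $YY^{-1}=G$.

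For the converse I would argue contrapositively: assuming there is no $G$-invariant type, I produce a definable left generic $Y$ and $g\in G$ with $gY\cap Y=\emptyset$, so that $YY^{-1}\neq G$. The first ingredient is a translation of genericity into the flow: since $\bigcup_i g_iY=G$ as subsets of $G$ is equivalent to $\bigcup_i[g_iY]=S_G(M)$, with $[g_iY]=g_i[Y]$, one proves the \emph{Lemma}: a definable $Y$ is left generic if and only if the clopen set $[Y]$ meets every minimal subflow of $S_G(M)$. One direction uses that $g^{-1}I=I$ for a minimal subflow $I$, so if $[Y]\cap I=\emptyset$ then $g[Y]\cap I=\emptyset$ for all $g$ and no finite union of translates can cover; the other direction uses that if $\{g[Y]:g\in G\}$ has no finite subcover then by compactness some orbit closure, hence some minimal subflow, lands in the closed set $S_G(M)\setminus[Y]$.

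The main step is a retraction trick that controls genericity. Fix a minimal subflow $I$ and, as in the Claim above, an idempotent $p_0\in I$ with $q*p_0=q$ for all $q\in I$. The map $r_{p_0}\colon S_G(M)\to S_G(M)$, $r_{p_0}(p)=p*p_0$, is continuous and $G$-equivariant, has image inside $I$ (as $I$ is a left ideal), and restricts to the identity on $I$; thus it is a continuous $G$-retraction onto $I$. Crucially, for \emph{any} minimal subflow $I'$ the image $r_{p_0}(I')=I'*p_0$ is a nonempty closed $G$-invariant subset of the minimal flow $I$, hence equals $I$, so $r_{p_0}$ maps every minimal subflow onto $I$. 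Since there is no $G$-invariant type, $p_0$ is not fixed, so there is $g\in G$ with $g\cdot p_0\neq p_0$. As $I$ is a closed subspace of the Stone space $S_G(M)$ it is compact, Hausdorff and totally disconnected, so I choose disjoint clopen (in $I$) sets $A\ni p_0$ and $B\ni g\cdot p_0$ and put $V=A\cap\{q\in I:g\cdot q\in B\}$, a clopen neighbourhood of $p_0$ in $I$ with $gV\cap V=\emptyset$. Setting $[Y]=r_{p_0}^{-1}(V)$ (clopen in $S_G(M)$, hence of this form for some definable $Y$), the set $[Y]$ meets every minimal subflow because $r_{p_0}$ is onto each such flow and $V\neq\emptyset$, so $Y$ is left generic by the Lemma; and a direct computation $g[Y]=r_{p_0}^{-1}(gV)$ yields $g[Y]\cap[Y]=r_{p_0}^{-1}(gV\cap V)=\emptyset$, i.e.\ $gY\cap Y=\emptyset$. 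This contradicts the hypothesis and finishes the contrapositive.

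The hard part is exactly the genericity requirement in the reverse direction: from the mere failure of a fixed point it is trivial to separate a definable set from one of its translates, but such a set need not be generic (its clopen set must meet \emph{every} minimal subflow, not just the one in hand). The device that overcomes this is the retraction $r_{p_0}$, which is surjective from \emph{every} minimal subflow onto the fixed $I$; pulling back any nonempty clopen of $I$ therefore automatically yields a generic set, and equivariance of $r_{p_0}$ converts the disjointness of $gV$ from $V$ inside $I$ into the desired $gY\cap Y=\emptyset$ downstairs.
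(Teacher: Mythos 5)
Your proof is correct; the easy direction is essentially identical to the paper's, but your argument for the hard direction takes a genuinely different route. The paper (adapting Pestov) argues directly that any minimal subflow $\mathcal{M}$ of $S_G(M)$ is a singleton: if $gp\neq p$ for some $p\in\mathcal{M}$, choose definable $Y\in p$ with $Y\cap gY=\emptyset$, cover $\mathcal{M}$ by finitely many translates $g_1[Y],\dots,g_k[Y]$ (minimality plus compactness), and form the \emph{return set} $Y_1=\{h\in G: Y\in hp\}$, which is definable exactly because the type $p$ is definable; then $g_1Y_1\cup\dots\cup g_kY_1=G$ while $Y_1\cap gY_1=\emptyset$, contradicting the right-hand side. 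You avoid return sets entirely and instead work through the semigroup $(S_G(M),*)$: right $*$-multiplication $r_{p_0}$ by an idempotent $p_0$ of a minimal left ideal $I$ (the paper's Claims (i) and (ii)) is a continuous $G$-equivariant retraction of $S_G(M)$ onto $I$ which maps \emph{every} minimal subflow onto $I$, so the preimage of a clopen subset of $I$ separating $p_0$ from $gp_0$ is automatically a left generic definable set disjoint from its $g$-translate, via your auxiliary Lemma that $Y$ is left generic iff $[Y]$ meets every minimal subflow. Both proofs rest on the definability-of-types assumption, but in different places: the paper needs it to make $Y_1$ definable, whereas you need it only through the definition and left-continuity of $*$ (your separating set is definable for free, being clopen in $S_G(M)$). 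As for what each buys: the paper's argument is lighter --- no idempotents or ideal theory, just minimality, compactness and one definable type --- and, as its Remark (i) notes, it generalizes to identify the kernel of the action of $G$ on its minimal subflow with the intersection of all sets $YY^{-1}$ for $Y$ definable left generic; your route makes genericity of the separating set conceptually automatic through the retraction, and your auxiliary Lemma is a clean reusable fact requiring no definability-of-types assumption at all, in the spirit of the paper's Remark (iii).
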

\begin{proof} Suppose first that there is a $G$-invariant type $p\in S_{G}(M)$. We will prove the right hand side  without assuming any definability of types assumption. Let $X$ be a left generic definable subset of $G$. So for some $g\in G$, $gX\in p$. By $G$-invariance of $p$, $X\in p$. As $p$ is $G$-invariant, $XX^{-1} = G$.
\newline
Now for the converse.   Given a definable subset $Y$ of $G$ we feel free to identify $Y$ with the clopen subset of $S_{G}(M)$ (and of any closed subset of $S_{G}(M)$) it defines. So for $p\in S_{G}(M)$ we may write $p\in Y$, although this corresponds to the formula defining $Y$ being in the type $p$. 
Assume the right hand side. 
Let $\cal M$ be a minimal closed $G$-invariant subset of $S_{G}(M)$.
We will prove that $\cal M$ is a singleton, by showing that for any $p\in \cal M$, and $g \in G$, $gp = p$. Suppose for a contradiction that for some $g\in G$ and $p\in S_{G}(M)$, $gp \neq p$. Then there is a formula (i.e. definable subset of $G$) $Y\in p$ such that $Y\cap g(Y) = \emptyset$.  Note that the union $U$ of all $gY$ must cover $\cal M$, Because if not then ${\cal M} \setminus U$ is closed $G$-invariant and nonempty, contradicting minimality of $\cal M$. By compactness of $\cal M$, it is covered by $g_{1}Y \cup \ldots \cup g_{k}Y$ for some $g_{1},\ldots,g_{k}\in G$.
Let $Y_{1} = \{h\in G: Y\in h(p)\}$. By definability of $p$, $Y_{1}$ is a definable subset of $G$.

\begin{claim}[I]
$g_{1}Y_{1} \cup \ldots \cup g_{k}Y_{1} = G$  (so $Y_{1}$ is left generic in $G$).
\end{claim}
\begin{proof}[Proof of Claim (I)]
Let $h\in G$. So $hp \in {\cal M}$ so $hp\in g_{i}Y$ for some $i=1,\ldots,k$. Hence $g_{i}^{-1}h \in Y_{1}$, and so $h\in g_{i}Y_{1}$. 
\end{proof}
%\vspace{2mm}

\begin{claim}[II]
$Y_{1}\cap gY_{1} = \emptyset$  (so $Y_{1}Y_{1}^{-1} \neq G$). 
\end{claim}
\begin{proof}[Proof of Claim (II)]
If not, let $h\in Y_{1}$ such that $gh\in Y_{1}$. So $hp\in Y$ and $ghp\in Y$ hence $hp\in Y\cap g^{-1}Y$ implying that $gY\cap Y\neq \emptyset$, contradicting choice of $Y$.
\end{proof}
Claims (I) and (II) contradict the right hand side, completing the proof. 
\end{proof}

\begin{Remark} (i) The proof above (which was adapted from Pestov's proof of Theorem 7.1 in \cite{Pestov}) generalizes to show that the kernel of the action of $G$ on its minimal subflow is precisely the intersection of the sets $YY^{-1}$ for $Y\subseteq G$ definable and left generic.
\newline
(ii) If $Th(M)$ is stable, then $G$ is definably extremely amenable iff $G$ is connected, in which case there is a unique invariant type. 
\newline
(iii) One might try to generalize Proposition 3.17 to the general situation with no definability of types assumption. As an example the following holds:
Every minimal subflow of $S_{G}(M)$ is a singleton if and only if whenever $Y\subseteq G$ is definable and (as a clopen subset of $S_{G}(M)$) meets some minimal subflow of $S_{G}(M)$ then $YY^{-1} = G$. 
\end{Remark}

\begin{acknowledgements}
The first author would like to thank Waldemar Hebisch for many interesting conversations. The third author would like to thank Itai Ben Yaacov and Ehud Hrushovski for helpful comments and suggestions following a talk he gave on this topic in Lyon, in October 2012.
\end{acknowledgements}

\end{document}